\newtheorem{Theorem}{Theorem}[section]
\newtheorem{Definition}[Theorem]{Definition}
\newtheorem{Proposition}[Theorem]{Proposition}
\newtheorem{Lemma}[Theorem]{Lemma}
\newtheorem{Assumption-Notation}[Theorem]{Assumption-Notation}
\newtheorem{Remark}[Theorem]{Remark}
\newtheorem{Corollary}[Theorem]{Corollary}
\numberwithin{equation}{section}
\begin{document}

\title{A note on the linear systems on the projective bundles over Abelian varieties}

\author{Lei Zhang}

\address{College of Mathematics and information Sciences\\Shaanxi Normal University\\Xi'an 710062\\P.R.China}

\email{lzhpkutju@gmail.com}

\thanks{I would like to thank Dr. Hao Sun who discussed with me and helped me to improve Theorem \ref{key}. I also express my gratitude to two anonymous refrees who gave a lot of useful suggestions.}

\subjclass[2000]{Primary 14E05, 14K99; Secondary 14E05, 14K99}

\date{March 18, 2012 and, in revised form, August 15, 2012.}

\keywords{Abelian varieties, linear system, birational maps}

\begin{abstract}
 It is well known that for an ample line bundle $L$ on an Abelian variety $A$, the linear system $|2L|$ is base point free, and $3L$ is very ample, moreover the map defined by the linear system $|2L|$ is well understood (cf. Theorem \ref{oldth}). In this paper we generalized this classical result and give a new proof using the theory developed by Pareschi and Popa in \cite{PP} (cf. Theorem \ref{key}).
\end{abstract}

\maketitle

\section{Introductions}\label{intro}

\textbf{Conventions:} All the varieties are assumed over $\mathbb{C}$. For a variety $X$ and a vector bundle $E$ on it, $\mathbb{P}_X(E)$ is defined as $Proj_{\mathcal{O}_X}(\oplus_kS^k(E^*))$, and $\mathcal{O}_{\mathbb{P}_X(E)}(1)$ denotes the anti-tautological bundle. Let $f: X \rightarrow Y$ be a morphism between two
smooth projective varieties, denote by $D^b(X)$ and $D^b(Y)$ their
bounded derived categories of coherent sheaves, and by $Rf_*$ and $Lf^*$
the derived functors of $f_*$ and $f^*$ respectively. For $E \in D^b(X)$,
$E^*$ denotes its dual $R\mathcal{H}om(E, \mathcal{O}_X)$, and we say that
$E$ is a sheaf if it is quasi-isomorphic to a sheaf in $D^b(X)$. If $X = X_1 \times X_2 \times ... \times X_r$ denotes the product of $r$ varieties, then $p_i$ denotes the projection from $X$ to the $i$-th factor $X_i$.
If $A$ denotes an Abelian variety, then $\hat{A}$ denotes its dual $Pic^0(A)$, $\mathcal{P}$ denotes the Poincar$\acute{e}$ line bundle on
$A \times \hat{A}$, and the Fourier-Mukai transform $R\Phi_{\mathcal{P}}: D^b(A) \rightarrow D^b(\hat{A})$ w.r.t. $\mathcal{P}$ is defined as
$$R\Phi_{\mathcal{P}}(\mathcal{F}) := R(p_2)_*(Lp_1^*\mathcal{F} \otimes \mathcal{P});$$
similarly $R\Psi_{\mathcal{P}}: D^b(\hat{A}) \rightarrow D^b(A)$ is defined as
$$R\Psi_{\mathcal{P}}(\mathcal{F}) := R(p_1)_*(Lp_2^*\mathcal{F} \otimes \mathcal{P})$$
For a sheaf $\mathcal{F}$ on an Abelian variety $A$, if $R\Phi_{\mathcal{P}}(\mathcal{F}) \cong R^0\Phi_{\mathcal{P}}(\mathcal{F})$, we say $\mathcal{F}$ satisfies $IT^0$. If $a: X \rightarrow A$ denotes a map to an Abelian variety, then $\mathcal{P}_a : = (a \times id_{\hat{A}}) ^*\mathcal{P}$, and for $\mathcal{F} \in D^b(X)$, $R\Phi_{\mathcal{P}_a}(\mathcal{F})$ is defined similarly; and if $\alpha \in \hat{A}$, we often denote the line bundle $a^*\alpha \in Pic^0(X)$ by $\alpha$ for simplicity.

For the linear systems over an Abelian variety, the following classical results are well known.
\begin{Theorem}\label{oldth}
Let $A$ be an Abelian variety, and $L$ an ample line bundle on it. Then
$3L$ is very ample, $|2L|$ is base point free; and if
$2L$ is not very
ample, then
\begin{itemize}
\item[(i)]{$A = A_1 \times A_2$ and $L \cong L_1 \boxtimes L_2$
where $L_i$ is a line bundle on $A_i$ and at least one of $(A_i,L_i)$ is a
principally polarization (\cite{Ram}, \cite{Oh});}
\item[(ii)]{moreover if $A$ is simple and $(A,L)$ is a principal polarization,
then $L$ is symmetric up to translation, and the
map $\phi$ defined by $|2L|$ coincides with the quotient map $A
\rightarrow A/(-1)_A$ (\cite{LB} Sec. 4.5, 4.8).}
\end{itemize}
\end{Theorem}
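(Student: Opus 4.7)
The plan is to prove the basepoint-freeness and very-ampleness assertions via the theorem of the square, and then analyze the failure of very-ampleness for $|2L|$ through the action of the theta (Heisenberg) group on sections.

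For basepoint-freeness of $|2L|$, I would exploit the identification $t_a^*L \otimes t_{-a}^*L \cong 2L$ coming from the theorem of the square, which yields a multiplication
\[
H^0(A, t_a^*L) \otimes H^0(A, t_{-a}^*L) \longrightarrow H^0(A, 2L).
\]
Given $x \in A$, one selects $a$ generically so that neither translate has $x$ as a basepoint (possible because $L$ is ample, so the base locus of $|L|$ is a proper closed subset and a very general translate avoids any prescribed point); then a product section is nonzero at $x$. Very-ampleness of $3L$ follows by the analogous decomposition $3L \cong t_a^*L \otimes t_b^*L \otimes t_{-a-b}^*L$, with the two parameters $a,b$ used to separate pairs of points and pairs of tangent vectors.

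For (i), suppose $|2L|$ fails to separate some (possibly infinitely near) pair $(x,y)$. Tracing this failure through the theorem-of-the-square multiplication and the unique irreducible Schr\"odinger representation of the Heisenberg group $\mathcal{G}(2L)$ on $H^0(A, 2L)$, one shows that the non-separated pair forces a nontrivial isotropic decomposition of $K(2L) = \ker \phi_{2L}$ compatible with the Weil pairing. By standard theta-group theory this isotropic splitting lifts to a splitting of the theta group and descends to a product decomposition $A = A_1 \times A_2$ with $L \cong L_1 \boxtimes L_2$; the failure of separation is concentrated on one factor, which must therefore be principally polarized. This group-theoretic step, essentially the Ramanan--Ohbuchi argument, is the main obstacle, since the bookkeeping of polarization types across the splitting requires the full apparatus of theta groups and their representations.

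For (ii), since $A$ is simple, no nontrivial product decomposition exists, so the only remaining possibility consistent with (i) is that $(A, L)$ is itself principally polarized. On a simple abelian variety every ample class is numerically $(-1)_A$-invariant, so after translation we may assume $L$ is symmetric, hence so is $2L$; the map $\phi$ therefore factors through $A \to A/(-1)_A$. Because $h^0(A, 2L) = 2^g$ matches the expected embedding dimension of the Kummer variety, one expects the factored map to be an embedding. Verifying this amounts to checking separation of generic Kummer orbits together with a local analysis at the $2^{2g}$ fixed points of $(-1)_A$, which is the classical theta-function calculation of Lange--Birkenhake, Section 4.8.
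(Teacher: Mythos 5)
Your treatment of base-point-freeness of $|2L|$ and very ampleness of $3L$ via the theorem of the square is the standard Lefschetz argument and is fine. The genuine gap is the mechanism you propose for (i). You assert that a non-separated pair "forces a nontrivial isotropic decomposition of $K(2L)=\ker\phi_{2L}$ compatible with the Weil pairing," and that "by standard theta-group theory this isotropic splitting \dots descends to a product decomposition $A=A_1\times A_2$." That step fails: $K(2L)$ always contains $A[2]$ and always admits a decomposition into maximal isotropic subgroups for the Weil pairing — this is precisely how the Schr\"odinger model of the irreducible representation of $\mathcal{G}(2L)$ is constructed — yet a general principally polarized abelian variety is simple. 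An isotropic splitting of $K(2L)$ yields a tensor decomposition of $H^0(A,2L)$ as a $\mathcal{G}(2L)$-module, but carries no information about a splitting of $A$ as a polarized abelian variety, so nothing "descends." The actual content of Ramanan--Ohbuchi lies elsewhere: non-very-ampleness of $2L$ is detected by the fixed components of the divisors in the family $\{t_a^*D\}_{a\in A}$, $D\in|L|$, and the product decomposition then comes from the decomposition theorem for the fixed part of an ample linear system on an abelian variety (recorded in the paper as Theorem \ref{lrdcb}). That geometric lemma, not a theta-group statement, is the missing key step in your proposal.

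For comparison, the paper does not reprove the classical theorem along your lines at all: it cites \cite{Ram}, \cite{Oh}, \cite{LB} for Theorem \ref{oldth} and instead obtains it as the rank-one case of Theorem \ref{key}, proved by Fourier--Mukai methods. Base-point-freeness and very ampleness come from $M$-regularity and continuous global generation (Propositions \ref{it0}, \ref{cgg}, \ref{gg}), and the analogue of your step (i) is Lemma \ref{spr} together with Corollary \ref{dgr} and Theorem \ref{rdcb}: failure of separation is translated into reducibility of the universal divisor $\mathcal{Y}_x$, whose vertical components are controlled exactly by Theorem \ref{lrdcb}. Your sketch of (ii) (symmetry up to translation, factoring through $A/(-1)_A$) is consistent with both routes; in the paper it is the $m=1$ case of Section \ref{pfmain}, where the involution is produced by Lemma \ref{keylemma} rather than by a theta-function computation.
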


In this paper, as a generalization, we prove
\begin{Theorem}\label{key}
Let $A$ be an Abelian variety, $E$ an $IT^0$ vector bundle on it,
$P = \mathbb{P}_A(E^*)$ with anti-tautological line bundle
$\mathcal{O}_P(1)$. Then
$\mathcal{O}_P(3)$ is very ample, and
the linear system $|\mathcal{O}_P(2)|$ is base point free and hence defines a morphism $\phi$. Moreover $\phi$ is not birational if and only if $A \cong A_1 \times A_2$, and $E \cong L_1 \boxtimes E_2$ where $L_1$ is a line bundle on $A_1$ and $E_2$ is a vector bundle on
$A_2$ such that either $\chi(A_1, L_1)
= 1$ or $\chi(A_2, E_2) = 1$.

In particular if $A$ is simple, then $\phi$ is not birational if and only if, up to translation, $E$
is a $(-1)_A$-invariant sheaf satisfying one of the following
\begin{enumerate}
\item[(i)]{$\chi(A,E) = 1$, i.e., $R\Phi_{\mathcal{P}}(E) \cong \mathcal{O}_{\hat{A}}(-\hat{D})$
where $\hat{D}$ is an ample divisor on $\hat{A}$, and then $deg(\phi) = 2$ when $dim(A) \geq 2$, $deg(\phi) = 2^{rank(E)}$ when $dim(A) = 1$;}
\item[(ii)]{$E \cong \oplus^n\mathcal{O}_A(L)$ where $(A,L)$ is a principal polarization, and then $deg(\phi) = 2$,}
\end{enumerate}
meanwhile there exists an involution $\sigma$ on $P$ such that $\phi$ factors
through the quotient map $P \rightarrow P/(\sigma)$, which fits into the
following commutative diagram
\[\begin{CD}
P      @> >>      P/(\sigma) \\
@V VV               @V VV \\
A       @> >>    A/(-1)_A
\end{CD} \]
\end{Theorem}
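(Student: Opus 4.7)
Since $\pi \colon P \to A$ is a projective bundle, $\pi_* \mathcal{O}_P(k) = S^k E$ and $R^i \pi_* \mathcal{O}_P(k) = 0$ for $i > 0$, $k \geq 0$, so $H^0(P, \mathcal{O}_P(k)) = H^0(A, S^k E)$ and $\pi^* S^k E \twoheadrightarrow \mathcal{O}_P(k)$. The plan is to recast $|\mathcal{O}_P(2)|$ base point free as $S^2 E$ globally generated, and $\mathcal{O}_P(3)$ very ample as $S^3 E$ globally generated together with separation of points in distinct fibers of $\pi$ and of tangent vectors along the $A$-direction (separation within each fiber is automatic from the cubic Veronese).

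\textbf{Step 2 (positivity from $IT^0$).} Since $E$ is $IT^0$ it is $M$-regular in the sense of \cite{PP}, and hence continuously globally generated. The Pareschi--Popa principle that the tensor product of an $M$-regular sheaf with another $IT^0$ sheaf is globally generated, applied to $E \otimes E$, yields global generation of $E^{\otimes 2}$; in characteristic zero $S^2 E$ is a direct summand and inherits the property. The very ampleness of $\mathcal{O}_P(3)$ then follows from the analogous separation-of-points and jet-separation results of \cite{PP} applied to $E^{\otimes 3}$.

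\textbf{Step 3 (product decomposition $\Leftrightarrow$ non-birationality).} One direction is direct: if $E \cong L_1 \boxtimes E_2$ with $\chi(L_1) = 1$, then $(A_1, L_1)$ is a principal polarization, $P = A_1 \times \mathbb{P}_{A_2}(E_2^*)$, and $\phi$ splits as $\phi_{|2L_1|} \times \phi_{|\mathcal{O}_{P_2}(2)|}$, whose first factor is non-birational by Theorem \ref{oldth} (the case $\chi(E_2) = 1$ is symmetric). For the converse, assume $\phi$ is not birational. Since each fiber of $\pi$ is embedded via the quadratic Veronese, the defect must come from $\phi$ collapsing pairs of points in distinct fibers; equivalently, for a positive-dimensional family of pairs $(x, y) \in A \times A$ the evaluation $H^0(S^2 E) \to S^2 E_x \oplus S^2 E_y$ fails to be surjective. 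I would feed this cohomological defect into the Fourier--Mukai transform $\hat E := R^0 \Phi_{\mathcal{P}}(E)$ and, using the Pareschi--Popa machinery, extract a structural constraint --- typically that $\hat E$ is supported on or pulled back from a proper abelian subvariety $\hat A_1 \subset \hat A$ --- which upon inverse transform gives the splitting $A = A_1 \times A_2$ with $E \cong L_1 \boxtimes E_2$ and the asserted Euler characteristic condition.

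\textbf{Step 4 (simple case, involution, obstacle).} If $A$ is simple, one factor in $A_1 \times A_2$ must be a point, giving case (i) ($\chi(E) = 1$, so $\hat E$ is a rank-$1$ torsion-free sheaf, hence $\hat E \cong \mathcal{O}_{\hat A}(-\hat D)$ with $\hat D$ ample by the $IT^0$ condition) or case (ii) ($E \cong L_1^{\oplus n}$ with $(A, L_1)$ a principal polarization). In both cases, after translating $E$ one can make $\hat D$ (respectively $L_1$) symmetric, so $(-1)_A^* E \cong E$; this lifts to an involution $\sigma$ on $P$ covering $(-1)_A$, and the $\sigma$-invariance of $\mathcal{O}_P(2)$ (being an even symmetric power) makes $\phi$ descend to $P/(\sigma)$, yielding the diagram. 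The degree formulas --- $2$ when $\dim A \geq 2$, and $2^{\mathrm{rank}(E)}$ in the elliptic case --- are recovered by computing $(\mathcal{O}_P(2))^{\dim P}$ and comparing with the dimension of the image. \emph{The main obstacle} is the converse in Step 3: translating the geometric failure of birationality into a clean cohomological constraint on $\hat E$, and showing that this constraint forces the splitting of $A$, is the delicate point, since the classical theta-function arguments do not generalize directly to higher rank and one must lean on the full Pareschi--Popa apparatus in their stead.
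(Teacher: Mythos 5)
Your outline reproduces the statement's architecture and the easy direction of the equivalence, but the two points that carry the actual content of the theorem are left unproved, and for the central one you yourself flag it as ``the main obstacle.'' First, the converse in your Step 3: the paper does not run the failure of separation through a cohomological defect of the evaluation map $H^0(S^2E) \to S^2E_x \oplus S^2E_y$ and a Fourier--Mukai analysis of $\hat E$. Instead it forms the dual bundle $\hat P = \mathbb{P}_{\hat A}(R\Phi_{\mathcal P}(E))$ and the universal divisor $\mathcal Y \subset P \times \hat P$ with $\mathcal Y \equiv p_1^*\mathcal{O}_P(1)\otimes p_2^*\mathcal{O}_{\hat P}(1)\otimes\tilde{\mathcal P}$ (Theorem \ref{pf}); the fiber $\mathcal Y_x$ is a divisor on $\hat P$ parametrizing the divisors in $\bigcup_\alpha|\mathcal{O}_P(1)\otimes\alpha|$ through $x$. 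Lemma \ref{spr} then shows that $|\mathcal{O}_P(2)|$ fails to separate $x,y$ exactly when $\mathcal{H}_x=\mathcal{H}_y$ and the supports of $V_x+(-1)_{\hat A}^*V_x$ and $V_y+(-1)_{\hat A}^*V_y$ agree --- the nontrivial implication uses the surjectivity of the sum of multiplication maps over all $\alpha\in\hat A$ (Remark \ref{ppglb}) to see that $H^0(I_x\otimes\mathcal{O}_P(2))$ is spanned by decomposable sections $e_\alpha\otimes f_{\alpha^{-1}}$. This yields $\deg\phi = 2^r$ with $r$ the number of vertical components of a general $\mathcal Y_x$ (Corollary \ref{dgr}), so non-birationality is equivalent to reducibility of $\mathcal Y_x$, and Theorem \ref{rdcb} converts that reducibility, via the fixed-part/moving-part decomposition of $|\mathcal{O}_{\hat P}(1)\otimes a|$ and the two homomorphisms $\iota_1,\iota_2$ it induces, into the splitting $A\cong A_1\times A_2$, $E\cong L_1\boxtimes E_2$. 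None of this is recoverable from your sketch; in particular your degree formula (``compute $(\mathcal{O}_P(2))^{\dim P}$ and compare with the image'') is circular, since the degree of the image is not known independently.

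Second, very ampleness of $\mathcal{O}_P(3)$ does not follow from global generation of $E^{\otimes 3}$ or from unspecified jet-separation results of \cite{PP}: one must separate points lying over distinct points of $A$ and tangent vectors in the $A$-direction. The paper's actual argument is to push forward $0\to I_x\otimes\mathcal{O}_P(1)\to\mathcal{O}_P(1)\to\mathbb{C}(x)\to 0$, apply $R\Phi_{\mathcal P}$, and deduce that $\pi_*(I_x\otimes\mathcal{O}_P(1))$ is a GV-sheaf; tensoring with $E$ makes $I_x\otimes\mathcal{O}_P(2)$ continuously globally generated, and Proposition \ref{gg} then gives global generation of $I_x\otimes\mathcal{O}_P(3)$ for every $x$, which is what very ampleness requires. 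Your Step 4 is closest to the paper (translate to make $E$ symmetric, descend $\phi$ to $P/(\sigma)$), but even there the descent needs $(-1)_A^*$ to act as the identity on $H^0(A,S^2E)$, not merely to preserve the linear system; this is Lemma \ref{keylemma}, which again rests on Remark \ref{ppglb}. In short, the proposal identifies the right tools but omits the construction ($\mathcal Y\subset P\times\hat P$) on which every hard step of the paper's proof depends.
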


\begin{Remark}
Comparing the two properties $IT^0$ and ampleness, they are equivalent for a line bundle on an Abelian variety; and for a vector bundle, Debarre proved that the property $IT^0$ implies ampleness (\cite{De}).
\end{Remark}

\begin{Remark}
The classical proof of Theorem \ref{oldth} is beautiful but very long. Here we not only generalized those classical results but also provided a brief proof thanks to the theory developed by Pareschi and Popa (\cite{PP}, \cite{PP2}). It is key to consider the case when $A$ is simple, and when $A$ is not simple,  with the help of Theorem \ref{lrdcb} and \ref{rdcb}
in Section \ref{pfkey}, we can also understand the morphism $\phi$ well (cf. Sec. \ref{ns}).
\end{Remark}

\section{Definitions and technical results}\label{tool}

In this section, we list some definitions and results which will be used in this paper.

\begin{Theorem}[\cite{Mu}, Thm. 2.2]\label{Mu} Let $A$ be an Abelian variety of dimension $d$. Then
$$R\Psi_{\mathcal{P}} \circ R\Phi_{\mathcal{P}} = (-1)_A^*[-d],~~~R\Phi_{\mathcal{P}} \circ R\Psi_{\mathcal{P}} = (-1)_{\hat{A}}^*[-d]$$
and
$$R\Phi_{\mathcal{P}}\circ (-1)_A^* \cong (-1)_{\hat{A}}^* \circ R\Phi_{\mathcal{P}}, R\Psi_{\mathcal{P}}\circ (-1)_{\hat{A}}^*\cong (-1)_{A}^* \circ R\Psi_{\mathcal{P}}$$
\end{Theorem}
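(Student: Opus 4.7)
The plan is to prove Mukai's inversion theorem by identifying each of the two compositions as an integral transform whose kernel is a shift of the structure sheaf of the anti-diagonal. First, on the triple product $X := A\times\hat A\times A$ with coordinate projections $\pi_i$ and $\pi_{ij}$, I would apply flat base change and the projection formula to rewrite
$$R\Psi_{\mathcal P}\circ R\Phi_{\mathcal P}(\mathcal F)\;\cong\; R(p'_2)_*\bigl((p'_1)^*\mathcal F\otimes \mathcal K\bigr),\qquad \mathcal K := R(\pi_{13})_*\bigl(\pi_{12}^*\mathcal P\otimes \pi_{23}^*\mathcal P\bigr)\in D^b(A\times A),$$
where $p'_1,p'_2$ are the two projections from $A\times A$. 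Thus the first identity reduces to proving $\mathcal K\cong\mathcal O_{\Delta_-}[-d]$, with $\Delta_-\subset A\times A$ the anti-diagonal $\{(a,-a)\}$.

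To compute $\mathcal K$, I would first use the biadditivity of the Poincar\'e bundle to write $\pi_{12}^*\mathcal P\otimes\pi_{23}^*\mathcal P\cong g^*\mathcal P$, where $g\colon X\to A\times\hat A$ sends $(a_1,\alpha,a_2)$ to $(a_1+a_2,\alpha)$. The square
$$\begin{CD}
X @>g>> A\times\hat A\\
@V\pi_{13}VV @VVp_1V\\
A\times A @>\mu>> A
\end{CD}$$
(with $\mu$ the addition map and $p_1$ the first projection) is Cartesian with all maps smooth, so flat base change yields $\mathcal K\cong L\mu^*\bigl(R(p_1)_*\mathcal P\bigr)$. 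The key lemma is
$$R(p_1)_*\mathcal P\;\cong\;\mathcal O_{\{0\}}[-d]\quad\text{in }D^b(A),$$
a skyscraper of length one at the origin placed in cohomological degree $d$. To prove it: (i) cohomology and base change, combined with the classical vanishing $H^\bullet(\hat A,\mathcal P_a)=0$ for every $a\ne 0$ (a nontrivial topologically trivial line bundle on an abelian variety has vanishing cohomology), shows that $R(p_1)_*\mathcal P$ is supported set-theoretically at $0\in A$; (ii) a direct local Koszul computation of the derived fibre at $0$, or equivalently an independent evaluation of $R\Gamma(A\times\hat A,\mathcal P)\cong\mathbb C[-d]$, pins down both the length and the cohomological degree. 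Granted this, $\mathcal K\cong L\mu^*\mathcal O_{\{0\}}[-d]=\mathcal O_{\Delta_-}[-d]$ (no higher Tors, since $\mu$ is smooth), and the first identity follows from the projection formula applied to the closed embedding $\Delta_-\hookrightarrow A\times A$.

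The second identity $R\Phi_{\mathcal P}\circ R\Psi_{\mathcal P}=(-1)_{\hat A}^*[-d]$ is obtained by running the same argument with the roles of $A$ and $\hat A$ exchanged, using the canonical identification $\hat{\hat A}=A$ under which $\mathcal P$ on $A\times\hat A$ is carried to $\mathcal P$ on $\hat A\times A$. The equivariance $R\Phi_{\mathcal P}\circ(-1)_A^*\cong(-1)_{\hat A}^*\circ R\Phi_{\mathcal P}$ is then formal: the Poincar\'e bundle satisfies $((-1)_A\times 1_{\hat A})^*\mathcal P\cong(1_A\times(-1)_{\hat A})^*\mathcal P$ (both isomorphic to $\mathcal P^{-1}$, since $\mathcal P$ is fixed by the joint involution $(-1)_A\times(-1)_{\hat A}$), and a change of variables inside $R(p_2)_*\bigl(p_1^*(-)\otimes\mathcal P\bigr)$ converts $(-1)_A^*$ on the source into $(-1)_{\hat A}^*$ on the target; the companion statement for $R\Psi_{\mathcal P}$ is identical. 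The main obstacle is the key lemma: set-theoretic support at the origin is easy via vanishing, but pinning the derived pushforward down to a single copy of $\mathcal O_{\{0\}}$ concentrated in degree $d$ requires either the local Koszul/base-change computation or the independent evaluation of $R\Gamma(A\times\hat A,\mathcal P)$.
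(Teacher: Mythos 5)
The paper does not prove this statement at all --- it is quoted verbatim from Mukai's paper (\cite{Mu}, Thm.\ 2.2) and used as a black box. Your proposal is a correct reconstruction of Mukai's own argument: convolution of kernels on the triple product, the see-saw identity $\pi_{12}^*\mathcal P\otimes\pi_{23}^*\mathcal P\cong g^*\mathcal P$, reduction to the key computation $R(p_1)_*\mathcal P\cong k(0)[-d]$ (which, as you note, rests on the classical evaluation $R\Gamma(A\times\hat A,\mathcal P)\cong\mathbb C[-d]$ due to Mumford), and the formal equivariance of $\mathcal P$ under $(-1)_A\times 1$ versus $1\times(-1)_{\hat A}$; there is nothing to compare it against in the paper, and no gap beyond the classical inputs you have already flagged.
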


As a corollary we have
\begin{Corollary}\label{Muc}
Let $A$ be an Abelian variety of dimension $d$, $E$ an object on $D^b(A)$, and $F =
R\Phi_{\mathcal{P}}(E)^*$. Then
$$R\Phi_{\mathcal{P}}(E) = F^*, ~~~R\Psi_{\mathcal{P}}(F) = E^*$$
\end{Corollary}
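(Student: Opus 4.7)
The first equality is essentially tautological. Since $\hat{A}$ is smooth projective, every object of $D^b(\hat{A})$ is perfect, hence biduality holds: $F^* = R\Phi_{\mathcal{P}}(E)^{**} \cong R\Phi_{\mathcal{P}}(E)$. So only the second equality requires work.

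My plan for $R\Psi_{\mathcal{P}}(F) = E^*$ is to first establish a duality compatibility for $R\Phi_{\mathcal{P}}$ itself, and then feed it into Theorem \ref{Mu}. Concretely, I will show
\begin{equation}\label{dualcompat}
R\Phi_{\mathcal{P}}(E)^* \cong (-1)_{\hat{A}}^* R\Phi_{\mathcal{P}}(E^*)[d].
\end{equation}
The input is Grothendieck--Serre duality for $p_2 \colon A \times \hat{A} \to \hat{A}$. Since $p_2$ is smooth of relative dimension $d$ and $\omega_A \cong \mathcal{O}_A$, its relative dualizing complex is $p_2^!\mathcal{O}_{\hat{A}} = \mathcal{O}_{A \times \hat{A}}[d]$, so applying duality to $Lp_1^* E \otimes \mathcal{P}$ gives
$$R\Phi_{\mathcal{P}}(E)^* \cong R(p_2)_*\bigl(Lp_1^* E^* \otimes \mathcal{P}^{-1}\bigr)[d].$$
To convert $\mathcal{P}^{-1}$ into $\mathcal{P}$ I use the standard identification $\mathcal{P}^{-1} \cong \sigma^*\mathcal{P}$ with $\sigma := (-1)_A \times id_{\hat{A}}$; since $p_2 \circ \sigma = p_2$, change-of-variables along $\sigma$ rewrites the right-hand side as $R\Phi_{\mathcal{P}}((-1)_A^* E^*)[d]$, and the intertwining $R\Phi_{\mathcal{P}} \circ (-1)_A^* \cong (-1)_{\hat{A}}^* \circ R\Phi_{\mathcal{P}}$ from Theorem \ref{Mu} yields \eqref{dualcompat}.

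Once \eqref{dualcompat} is in hand, applying $R\Psi_{\mathcal{P}}$ and using the remaining identities of Theorem \ref{Mu} (both the intertwining with $(-1)^*$ and the double-transform formula $R\Psi_{\mathcal{P}}\circ R\Phi_{\mathcal{P}} = (-1)_A^*[-d]$) finishes the proof in one line:
$$R\Psi_{\mathcal{P}}(F) = R\Psi_{\mathcal{P}}\bigl(R\Phi_{\mathcal{P}}(E)^*\bigr) \cong (-1)_A^* R\Psi_{\mathcal{P}}\bigl(R\Phi_{\mathcal{P}}(E^*)\bigr)[d] \cong (-1)_A^*(-1)_A^* E^*[-d][d] = E^*.$$
The only real obstacle is careful bookkeeping in \eqref{dualcompat}: one has to keep the shift by $d$ and the two sign involutions $(-1)_A^*$, $(-1)_{\hat{A}}^*$ straight so that they collapse exactly as above when Mukai's double-transform identity is substituted at the end.
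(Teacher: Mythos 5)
Your proof is correct and uses essentially the same ingredients as the paper's: Grothendieck duality for a projection of $A\times\hat{A}$, the identity $\mathcal{P}^{-1}\cong((-1)_A\times id_{\hat{A}})^*\mathcal{P}$, and Mukai's inversion theorem together with its intertwining relations. The only difference is organizational: you apply duality along $p_2$ to isolate the compatibility $R\Phi_{\mathcal{P}}(E)^*\cong(-1)_{\hat{A}}^*R\Phi_{\mathcal{P}}(E^*)[d]$ before composing with $R\Psi_{\mathcal{P}}$, whereas the paper applies duality along $p_1$ inside the direct expansion of $R\Psi_{\mathcal{P}}(F)$ and recognizes $R\Psi_{\mathcal{P}^{-1}}\circ R\Phi_{\mathcal{P}}$; the shift and sign bookkeeping collapses identically either way.
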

\begin{proof}
We need to show the second equation.
Let $p_1$ and $p_2$ be the projections from $A \times \hat{A}$ to $A$ and $\hat{A}$ respectively. Then we have
\begin{equation}
\begin{split}
R\Psi_{\mathcal{P}}(F) &= R(p_1)_*(Lp_2^*R\mathcal{H}om(R\Phi_{\mathcal{P}}(E), \mathcal{O}_{\hat{A}})\otimes \mathcal{P}) \\
& = R(p_1)_*(R\mathcal{H}om(Lp_2^*R\Phi_{\mathcal{P}}(E), \mathcal{O}_{A \times \hat{A}})\otimes \mathcal{P})\\
& = R(p_1)_*R\mathcal{H}om(Lp_2^*R\Phi_{\mathcal{P}}(E), \mathcal{P})\\
& = R(p_1)_*R\mathcal{H}om(Lp_2^*R\Phi_{\mathcal{P}}(E)\otimes \mathcal{P}^{-1}, \mathcal{O}_{A \times \hat{A}})\\
& = R\mathcal{H}om(R(p_1)_*(Lp_2^*R\Phi_{\mathcal{P}}(E)\otimes \mathcal{P}^{-1}), \mathcal{O}_{A}[-d]) \text{ $\cdot\cdot\cdot$ by Grothendieck duality}\\
& = R\mathcal{H}om(R\Psi_{\mathcal{P}^{-1}}R\Phi_{\mathcal{P}}(E), \mathcal{O}_{A}[-d]) \\
& = R\mathcal{H}om((-1)_{A}^*R\Psi_{\mathcal{P}}R\Phi_{\mathcal{P}}(E), \mathcal{O}_{A}[-d]) \text{ $\cdot\cdot\cdot$ by~$R\Psi_{\mathcal{P}^{-1}} = (-1)_{A}^*R\Psi_{\mathcal{P}}$} \\
& = R\mathcal{H}om(E[-d], \mathcal{O}_{A}[-d]) = E^* \text{ $\cdot\cdot\cdot$ by Theorem \ref{Mu}}
\end{split}
\end{equation}
\end{proof}

\begin{Definition}[\cite{PP2}, Def. 2.1, 2.8, 2.10, \cite{CH2}, Def. 2.6]\label{defgv}
Given a coherent sheaf $\mathcal{F}$ on an Abelian variety $A$, its i-th cohomological support locus is defined as
$$V^i(\mathcal{F}): = \{\alpha \in Pic^0(A)| h^i(\mathcal{F} \otimes \alpha) > 0\}$$
The number $gv(\mathcal{F}): = min_{i>0}\{codim_{Pic^0(A)}V^i(\mathcal{F}) - i\}$ is called the generic vanishing index of $\mathcal{F}$, and
we say $\mathcal{F}$ is a $GV-sheaf$ (resp. $M-regular~ sheaf$) if $gv(\mathcal{F}) \geq 0$ (resp. $>0$).

Let $X$ be an irregular variety equipped with a morphism to an Abelian variety $a: X \rightarrow
A$. Let $\mathcal{F}$ be a sheaf on $X$, its i-th cohomological support locus w.r.t. $a$ is defined as
$$V^i(\mathcal{F}, a) := \{\alpha \in Pic^0(A)| h^i(X, \mathcal{F} \otimes (a^*\alpha)) > 0\}$$
We say $\mathcal{F}$ is full w.r.t. the map $a$ if $V^0(\mathcal{F}, a) = \hat{A}$, and is $continuously~ globally~ generated$ (CGG) w.r.t. $a$ if the sum of the evaluation maps
$$ev_U: \oplus_{\alpha \in U}H^0(\mathcal{F} \otimes \alpha) \otimes (\alpha^{-1}) \rightarrow \mathcal{F}$$
is surjective for any open set $U \subset \hat{A}$.
\end{Definition}

Let's recall the following results due to Pareschi and Popa.
\begin{Proposition}[\cite{PP2}, Prop. 3.1]\label{it0}
Let $\mathcal{F}$ be a GV-sheaf and $H$ a locally free $IT^0$ sheaf on an Abelian variety $A$. Then $\mathcal{F} \otimes H$ satisfies $IT^0$.
\end{Proposition}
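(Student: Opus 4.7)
The statement is that $H^i(A, \mathcal{F} \otimes H \otimes \alpha) = 0$ for all $i > 0$ and all $\alpha \in \hat{A}$. Since $H \otimes \alpha$ is again locally free and $IT^0$, I absorb $\alpha$ into $H$ and reduce to proving $H^i(A, \mathcal{F} \otimes H) = 0$ for $i > 0$, for every locally free $IT^0$ sheaf $H$. My plan is to transport the cohomology from $A$ to $\hat{A}$ by replacing $H$ with its Fourier--Mukai representation, and then to use the $GV$ hypothesis on $\mathcal{F}$ to produce the desired vanishing on $\hat{A}$.

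\emph{Representing $H$.} Because $H$ is locally free and $IT^0$, cohomology and base change force $\hat{H} := R^0\Phi_{\mathcal{P}}(H) = R\Phi_{\mathcal{P}}(H)$ to be a locally free coherent sheaf on $\hat{A}$. Mukai's inversion theorem (Theorem \ref{Mu}) then gives
\[
R\Psi_{\mathcal{P}}(\hat{H})[d] \;\cong\; (-1)_A^* H .
\]
Replacing $H$ by $(-1)_A^* H$ (still locally free and $IT^0$, so a valid object for the problem), I may assume $H \cong R\Psi_{\mathcal{P}}(\hat{H})[d]$ in $D^b(A)$.

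\emph{Transport to $\hat{A}$.} Substituting this description of $H$ and applying the derived projection formula along $p_1 : A \times \hat{A} \to A$, then Leray along $p_2$, and once more the projection formula with the locally free sheaf $\hat{H}$, I obtain
\[
H^i(A, \mathcal{F} \otimes H) \;=\; \mathbb{H}^{\,i+d}\bigl(\hat{A},\; \hat{H} \otimes R\Phi_{\mathcal{P}}(\mathcal{F})\bigr).
\]
So it suffices to prove $\mathbb{H}^n(\hat{A}, \hat{H} \otimes R\Phi_{\mathcal{P}}(\mathcal{F})) = 0$ for every $n > d = \dim A$.

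\emph{Vanishing via $GV$.} I argue via the hypercohomology spectral sequence
\[
E_2^{p,q} \;=\; H^p\bigl(\hat{A},\, R^q\Phi_{\mathcal{P}}(\mathcal{F}) \otimes \hat{H}\bigr) \;\Longrightarrow\; \mathbb{H}^{p+q}\bigl(\hat{A},\, R\Phi_{\mathcal{P}}(\mathcal{F}) \otimes \hat{H}\bigr),
\]
which is available because $\hat{H}$ is locally free and tensoring with it is exact. On the open set $\hat{A} \setminus V^q(\mathcal{F})$ the function $\alpha \mapsto h^q(\mathcal{F} \otimes \alpha)$ is identically zero, so by Grauert's theorem $R^q\Phi_{\mathcal{P}}(\mathcal{F})$ is locally free of rank zero there; hence $\mathrm{Supp}\,R^q\Phi_{\mathcal{P}}(\mathcal{F}) \subseteq V^q(\mathcal{F})$. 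The hypothesis $gv(\mathcal{F}) \geq 0$ then gives $\mathrm{codim}_{\hat{A}}\,\mathrm{Supp}(R^q\Phi_{\mathcal{P}}\mathcal{F}) \geq q$ for $q > 0$, so $R^q\Phi_{\mathcal{P}}(\mathcal{F}) \otimes \hat{H}$ is supported in dimension $\leq d - q$ and Grothendieck vanishing forces $E_2^{p,q} = 0$ whenever $p + q > d$ (the case $q = 0$ is the ordinary bound $p > d$ on the $d$-dimensional $\hat{A}$). Consequently $\mathbb{H}^n = 0$ for $n > d$, which by the display of the previous paragraph gives $H^i(A, \mathcal{F} \otimes H) = 0$ for all $i > 0$.

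The main obstacle will be the bookkeeping in the transport step: keeping track of the shift $[d]$ and the absorption of the $(-1)_A^*$-twist, and verifying all the projection-formula/Leray identifications in the derived category. Once those are carefully aligned, the inclusion $\mathrm{Supp}\,R^q\Phi \subseteq V^q$ is a clean consequence of Grauert, and the final vanishing is essentially a dimension count via Grothendieck vanishing on $\hat{A}$.
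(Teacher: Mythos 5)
Your argument is correct: the reduction $H\cong R\Psi_{\mathcal{P}}(\hat H)[d]$ via Mukai inversion, the projection-formula transport to $\mathbb{H}^{i+d}(\hat A, R\Phi_{\mathcal{P}}(\mathcal{F})\otimes\hat H)$, the inclusion $\mathrm{Supp}\,R^q\Phi_{\mathcal{P}}(\mathcal{F})\subseteq V^q(\mathcal{F})$ from Grauert, and the final dimension count in the hypercohomology spectral sequence all hold up. The paper itself gives no proof of this Proposition --- it is quoted verbatim from \cite{PP2}, Prop.\ 3.1 --- and your argument is essentially the standard Fourier--Mukai exchange plus spectral-sequence proof given there, so there is nothing in the paper to diverge from.
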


\begin{Proposition}[\cite{PP2}, Cor. 5.3]\label{cgg}
An M-regular sheaf on an Abelian variety is CGG.
\end{Proposition}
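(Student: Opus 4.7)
The plan is to reduce the continuous global generation of $\mathcal{F}$ to a torsion-freeness property of a Fourier--Mukai transform, which is exactly the extra strength of $M$-regularity over the $GV$ condition. Fix a point $x \in A$ and a nonempty open subset $U \subseteq \hat{A}$. CGG at $x$ over $U$ is equivalent to the statement that the sum of evaluation maps $\mathrm{ev}_\alpha : H^0(\mathcal{F}\otimes \alpha) \otimes \alpha^{-1}|_x \to \mathcal{F}(x)$, over $\alpha \in U$, is surjective. Failure of surjectivity would produce a nonzero covector $\lambda \in \mathcal{F}(x)^\vee$ killing every image, and by the long exact sequence attached to
$$0 \to \mathfrak{m}_x \otimes \mathcal{F} \to \mathcal{F} \to \mathcal{F}(x) \to 0,$$
the element $\lambda$ would lift to a family, indexed by $\alpha \in U$, of nonzero classes in $H^1(\mathfrak{m}_x \otimes \mathcal{F} \otimes \alpha)$.

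Next I would globalize this family. Applying $R\Phi_\mathcal{P}$ to the short exact sequence above produces a distinguished triangle on $\hat{A}$; combined with Grothendieck--Verdier duality and Mukai's inversion theorem (Corollary \ref{Muc} and Theorem \ref{Mu}), base change identifies the ``connecting sheaf'' that controls the family of $H^1$-classes obstructing $\lambda$ with the restriction to $U$ of a coherent sheaf on $\hat{A}$ built from the Fourier--Mukai transform of the Grothendieck dual $R\mathcal{H}om(\mathcal{F}, \mathcal{O}_A[d])$. A genuine obstruction to CGG at $x$ over $U$ would then manifest as a nonzero local section of this sheaf whose support avoids $U$, i.e.\ is contained in the proper closed subset $\hat{A} \setminus U$.

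This is where $M$-regularity closes the argument. The Pareschi--Popa formalism characterizes $\mathcal{F}$ being $M$-regular as the statement that the Fourier--Mukai transform of $R\mathcal{H}om(\mathcal{F}, \mathcal{O}_A[d])$ is a \emph{torsion-free} coherent sheaf on $\hat{A}$, whereas the weaker $GV$ hypothesis only forces this transform to be concentrated in degree zero. Torsion-freeness excludes nonzero sections supported on any proper closed subset, which directly contradicts the obstruction produced in the previous step. Hence no such $\lambda$ exists, and CGG of $\mathcal{F}$ follows.

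The main obstacle is the middle step: translating the pointwise obstruction --- a family of $H^1$-classes indexed by $\alpha \in U$ and glued only through the covector $\lambda$ --- into a single global coherent section on $\hat{A}$ whose existence torsion-freeness rules out. This requires a careful base-change and duality computation identifying precisely which coherent sheaf on $\hat{A}$ carries the CGG obstruction at $x$, so that the torsion-freeness input supplied by $M$-regularity can be cashed in cleanly. Once this identification is set up, the argument is essentially formal.
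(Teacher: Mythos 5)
The paper offers no proof of this proposition: it is quoted verbatim from \cite{PP2}, Cor.\ 5.3, so there is no internal argument to compare yours against. Measured against the original Pareschi--Popa proof, your strategy is essentially theirs: the characterization of $M$-regularity you invoke --- that $R\Phi_{\mathcal{P}}(R\mathcal{H}om(\mathcal{F},\mathcal{O}_A))$ is concentrated in degree $d=\dim A$ with $R^d\Phi_{\mathcal{P}}(R\mathcal{H}om(\mathcal{F},\mathcal{O}_A))$ torsion-free, whereas $GV$ only gives concentration in one degree --- is a genuine theorem of \cite{PP}, and the conversion of a failure of CGG at a point into a torsion section of that transform is exactly how the result is proved there. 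So the route is correct and is not a new one.

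As submitted, however, the proposal is not yet a proof, because the step you explicitly defer (``translating the pointwise obstruction into a single global coherent section'') is the entire mathematical content; everything before and after it is bookkeeping. Two concrete remarks. First, a small inaccuracy: the covector $\lambda$ does not lift to classes in $H^1(\mathfrak{m}_x\mathcal{F}\otimes\alpha)$; the cokernel of evaluation injects into that $H^1$, so $\lambda$ lives in the \emph{dual} of it, and chasing duals for each $\alpha$ separately makes the gluing over $\alpha\in U$ awkward. Second, the clean way to close the gap avoids the ideal-sheaf sequence entirely: a covector on the fibre is a morphism $\mathcal{F}\to k(x)$, and, writing $R\Delta\mathcal{F}=R\mathcal{H}om(\mathcal{F},\mathcal{O}_A)$ and using $R\Delta(k(x))\cong k(x)[-d]$ together with Mukai's equivalence (Theorem \ref{Mu}), one gets
$$\mathrm{Hom}(\mathcal{F},k(x))\;\cong\;\mathrm{Hom}_{D^b(\hat{A})}\bigl(R\Phi_{\mathcal{P}}(k(x)),\,R\Phi_{\mathcal{P}}(R\Delta\mathcal{F})[d]\bigr)\;\cong\;H^0\bigl(\hat{A},\,R^d\Phi_{\mathcal{P}}(R\Delta\mathcal{F})\otimes\mathcal{P}_x^{-1}\bigr),$$
the last step using the $GV$ hypothesis. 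Cohomological base change on the dense open locus where $h^0(\mathcal{F}\otimes\alpha)$ is constant, together with Serre duality $H^d(R\Delta\mathcal{F}\otimes\alpha)\cong H^0(\mathcal{F}\otimes\alpha^{-1})^{\vee}$, identifies the value of the resulting section at $\alpha$ with the composite $H^0(\mathcal{F}\otimes\alpha^{-1})\to\mathcal{F}\otimes\alpha^{-1}\otimes k(x)\xrightarrow{\;\lambda\;}\alpha^{-1}\otimes k(x)$. Your hypothesis then says this section vanishes on a dense open subset of the irreducible variety $\hat{A}$, so it is a torsion section, and torsion-freeness forces $\lambda=0$. Until this identification and its compatibility with base change are actually written out, the argument remains a plan rather than a proof.
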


\begin{Proposition}[\cite{PP}, Prop. 2.12] \label{gg}
Let $X$ be an irregular variety equipped with a morphism to an Abelian variety $a: X \rightarrow
A$. Let $F$ be a coherent sheaf and $L$ a line bundle on $X$. Suppose that both $F$ and $L$ are continuously globally generated w.r.t. $a$. Then $F \otimes L \otimes \alpha$ is globally generated for all $\alpha \in Pic^0(A)$.
\end{Proposition}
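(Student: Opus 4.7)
The plan is to exploit the projection formula $\pi_*\mathcal{O}_P(k) = S^k E$ for $k \geq 0$ (with $\pi : P \to A$ the structure map) to translate everything into global generation and separation statements for the symmetric powers of $E$ on $A$, and then feed these through the Pareschi--Popa continuous-generation machinery recalled in Propositions \ref{it0}--\ref{gg}.

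Since $E$ is $IT^0$ it is trivially M-regular, so by Proposition \ref{cgg} it is CGG on $A$; the identification $H^0(P,\mathcal{O}_P(1)\otimes \pi^*\alpha) = H^0(A,E\otimes \alpha)$ then makes $\mathcal{O}_P(1)$ CGG on $P$ with respect to $\pi$. Applying Proposition \ref{gg} on $P$ with $F = L = \mathcal{O}_P(1)$ yields global generation of $\mathcal{O}_P(2)$, and a second application with $F = \mathcal{O}_P(1)$ and $L = \mathcal{O}_P(2)$ (now CGG because globally generated) gives global generation of $\mathcal{O}_P(3)$. For very ampleness of $\mathcal{O}_P(3)$ I would show that for each length-$2$ subscheme $Z \subset P$ the twisted sheaf $\mathcal{O}_P(2)\otimes \mathcal{I}_Z$ is CGG with respect to $\pi$; combined with CGG of $\mathcal{O}_P(1)$ and Proposition \ref{gg}, this produces sections of $\mathcal{O}_P(3)$ separating $Z$. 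The CGG property of $\mathcal{O}_P(2)\otimes \mathcal{I}_Z$ is reduced via pushforward to M-regularity of a twist of $S^2E$, and $S^2E$ itself is $IT^0$ by Proposition \ref{it0} applied to the splitting $E\otimes E = S^2 E \oplus \wedge^2 E$.

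For the birationality analysis, first note that $\phi$ embeds each fibre of $\pi$: on $\pi^{-1}(x) \cong \mathbb{P}(E_x^*)$ the bundle $\mathcal{O}_P(2)$ restricts to $\mathcal{O}(2)$, and global generation of $S^2 E$ gives surjectivity of $H^0(P,\mathcal{O}_P(2)) \to H^0(\mathbb{P}(E_x^*),\mathcal{O}(2))$, so the restriction is the complete Veronese embedding. Hence non-birationality must come from identifications between points over distinct points of $A$, and this amounts to a nontrivial symmetry of $E$ visible after Fourier--Mukai transform. Using Corollary \ref{Muc} together with the reduction theorems \ref{lrdcb} and \ref{rdcb} alluded to in the introduction, I extract from such a symmetry a splitting $A = A_1 \times A_2$ and a compatible decomposition $E = L_1 \boxtimes E_2$ in which one factor has Euler characteristic one. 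Conversely, in the product case $S^2 E \cong L_1^{\otimes 2} \boxtimes S^2 E_2$, and the classical Theorem \ref{oldth}(ii) applied to the principal polarization factor produces the degree-$2$ identification, preventing $\phi$ from being birational.

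When $A$ is simple the product decomposition collapses, forcing either (i) $\chi(A,E) = 1$ --- in which case $R\Phi_\mathcal{P}(E)$ is a rank-one $IT^0$ sheaf $\mathcal{O}_{\hat A}(-\hat D)$ with $\hat D$ ample by Debarre's theorem --- or (ii) $E$ is a direct sum of copies of a principal polarization line bundle. In both cases the $(-1)_{\hat A}^*$-symmetry of $R\Phi_\mathcal{P}(E)$, transported back via Theorem \ref{Mu}, yields a $(-1)_A$-equivariant structure on $E$ after translation; this lifts canonically to the involution $\sigma$ on $P = \mathbb{P}_A(E^*)$ and produces the required commutative square with $A \to A/(-1)_A$. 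The degree count in each case then follows from a generic-fibre analysis of $\phi$ along $P/\sigma$. The main obstacle will be the product-decomposition step: extracting $A = A_1 \times A_2$ from positive-dimensional identifications by $\phi$ is likely to require decomposing $R\Phi_\mathcal{P}(E)$ along a translate of a suitable abelian subvariety, rather than the elementary CGG bookkeeping that suffices for base-point freeness and very ampleness.
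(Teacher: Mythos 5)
Your proposal does not address the statement at hand. Proposition \ref{gg} is a general assertion about an arbitrary irregular variety $X$ with a morphism $a\colon X\to A$ and an arbitrary pair of continuously globally generated sheaves, $F$ coherent and $L$ a line bundle: the claim is that $F\otimes L\otimes\alpha$ is globally generated for every $\alpha\in Pic^0(A)$. What you have written instead is an outline of a proof of Theorem \ref{key} --- base-point freeness of $|\mathcal{O}_P(2)|$, very ampleness of $\mathcal{O}_P(3)$, the birationality analysis of $\phi$, the product decomposition of $A$, and the involution $\sigma$. None of that is the content of Proposition \ref{gg}; in the paper this proposition is quoted without proof from Pareschi--Popa (\cite{PP}, Prop.\ 2.12) precisely as one of the tools used to prove Theorem \ref{key}. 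Worse, your outline explicitly invokes Proposition \ref{gg} itself (``applying Proposition \ref{gg} on $P$ with $F=L=\mathcal{O}_P(1)$'', and again for $\mathcal{O}_P(3)$), so read as a proof of Proposition \ref{gg} it is circular.

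A proof of the actual statement would have to run along entirely different lines: fix $x\in X$ and $\alpha\in Pic^0(A)$; since $L$ is CGG w.r.t.\ $a$, the set of $\beta\in\hat A$ for which $L\otimes\alpha\otimes\beta^{-1}$ admits a section not vanishing at $x$ contains a dense open subset $U$; since $F$ is CGG, finitely many $\beta_1,\dots,\beta_N\in U$ suffice for the evaluation map $\bigoplus_i H^0(F\otimes\beta_i)\otimes\beta_i^{-1}\to F$ to be surjective at $x$; multiplying sections of $F\otimes\beta_i$ by the chosen nonvanishing sections of $L\otimes\alpha\otimes\beta_i^{-1}$ then produces global sections of $F\otimes L\otimes\alpha$ generating its fibre at $x$. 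No step of this kind appears anywhere in your proposal, so as a proof of the stated proposition it is missing in its entirety.
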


\begin{Theorem}[\cite{PP}, Thm. 5.1]\label{ppglb}
Let $F$ be a GV-sheaf on an Abelian variety $A$. Then the following conditions are equivalent:
\begin{enumerate}
\item[(a)]{$F$ is M-regular.}
\item[(b)]{For every locally free $IT^0$ sheaf $H$ on $A$, and for every Zariski open set $U \subset \hat{A}$, the sum of the multiplication of maps of global sections
    $$\oplus_{\alpha \in U}H^0(A, F\otimes \alpha) \otimes H^0(A, H\otimes \alpha^{-1}) \rightarrow H^0(A, F\otimes H)$$
    is surjective.}
\end{enumerate}
\end{Theorem}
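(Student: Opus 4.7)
The plan is to split the proof into (A) establishing global generation of $\mathcal{O}_P(2)$ and very ampleness of $\mathcal{O}_P(3)$ via Pareschi--Popa theory, (B) characterizing non-birationality of $\phi$ when $A$ is simple (the core of the argument), and (C) reducing the general case to the simple one via later structural theorems of the paper.

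(A) \emph{Generation.} Since $E$ is $IT^0$, one has $V^i(E) = \emptyset$ for all $i > 0$, so $E$ is M-regular, and hence CGG by Proposition \ref{cgg}. Pulling back via the tautological surjection $\pi^*E \twoheadrightarrow \mathcal{O}_P(1)$ (with $\pi: P \to A$), the line bundle $\mathcal{O}_P(1)$ inherits the CGG property with respect to $\pi$. Applying Proposition \ref{gg} with $F = L = \mathcal{O}_P(1)$ yields that $\mathcal{O}_P(2) \otimes \pi^*\alpha$ is globally generated for every $\alpha \in \hat{A}$, whence $|\mathcal{O}_P(2)|$ is base point free. Very ampleness of $\mathcal{O}_P(3)$ follows analogously from a triple-product variant of Proposition \ref{gg} (cf. \cite{PP}), combined with the observation that $\mathcal{O}_P(3)$ restricts to the very ample $3$-Veronese on each fiber $\mathbb{P}^{r-1}$ of $\pi$.

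(B) \emph{Simple case.} Since $\mathcal{O}_P(2)|_{P_a}$ is the very ample $2$-Veronese for every $a$, $\phi$ embeds each fiber of $\pi$; therefore $\phi$ fails to be birational iff it identifies two distinct fibers $P_a$ and $P_b$ on a dense open subset of $A\times A$. The locus of such identifications is algebraic, and when $A$ is simple the only possibility is that it is the graph of an involution $\tau = t_c \circ (-1)_A$. After a translation one may take $\tau = (-1)_A$, so $E$ is $(-1)_A$-invariant up to translation, and this isomorphism lifts to an involution $\sigma: P \to P$ through which $\phi$ factors, giving the claimed commutative diagram. To distinguish cases (i) and (ii), I would transport the problem through the Fourier--Mukai transform: $\hat{E} := R^0\Phi_{\mathcal{P}}(E)$ is $(-1)_{\hat{A}}$-invariant by Theorem \ref{Mu}, and Corollary \ref{Muc} provides the two-way translation between $E$ and $\hat E$. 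Case (i), $\chi(A,E)=1$, is exactly the rank-one case $\hat{E} = \mathcal{O}_{\hat{A}}(-\hat D)$ with $\hat D$ ample; case (ii) corresponds to $\hat E$ being a direct sum of $\mathcal{O}_{\hat A}$-shifts of a principal polarization, which by inverting the Fourier--Mukai transform gives $E \cong \oplus^n \mathcal{O}_A(L)$ with $(A,L)$ principal. The degree $\deg\phi = 2$ for $\dim A \geq 2$ follows from the factorization through the $2$-to-$1$ quotient $P \to P/\sigma$, while for $\dim A = 1$ the extra factor $2^{\mathrm{rank}(E)}$ arises from the isolated fixed points of $\sigma$ on each fiber.

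(C) \emph{General case.} When $A$ is not simple, I invoke the later Theorems \ref{lrdcb} and \ref{rdcb} of the paper, which produce the decomposition $A \cong A_1 \times A_2$ and the corresponding splitting $E \cong L_1 \boxtimes E_2$ (with $L_1$ a line bundle, forced by the fact that the Künneth decomposition of $S^2 E$ must be compatible with the non-birational pattern on $\phi$). Under this splitting, $P \cong A_1 \times P'$ with $P' = \mathbb{P}_{A_2}(E_2^*)$, and the Künneth formula $H^0(P, \mathcal{O}_P(2)) = H^0(A_1, L_1^{\otimes 2}) \otimes H^0(P', \mathcal{O}_{P'}(2))$ shows that $\phi$ is the external product of the $|2L_1|$-map with the $|\mathcal{O}_{P'}(2)|$-map. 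Non-birationality of this product then corresponds, via Theorem \ref{oldth}(i) applied to $(A_1, L_1)$ or inductively to $(A_2, E_2)$, to the conditions $\chi(A_1, L_1) = 1$ or $\chi(A_2, E_2) = 1$. The converse ("if") direction is a direct check: $\chi(A_1, L_1) = 1$ implies $(A_1, L_1)$ is a principal polarization, and $|2L_1|$ is the $(-1)$-quotient of degree $2$ by Theorem \ref{oldth}(ii), so $\phi$ is $2$-to-$1$.

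The principal obstacle is producing the hidden involution in (B) from the mere failure of birationality — that is, showing that the locus of identified fiber-pairs in $A \times A$ is forced to be the graph of a global algebraic involution rather than an exotic correspondence. This is where the interplay between Theorem \ref{ppglb} (surjectivity of multiplication maps on sections of M-regular sheaves) and the Fourier--Mukai transform becomes essential: the $\chi = 1$ conditions in the theorem are exactly what guarantees that $\Phi_{\mathcal{P}}(E)$ has the minimal possible rank needed for the classical $|2L|$-degeneration phenomenon on the transform side to pull back to a non-birational $\phi$.
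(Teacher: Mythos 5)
Your proposal does not address the statement you were asked to prove. The statement is Theorem \ref{ppglb}, which is the Pareschi--Popa result (\cite{PP}, Thm.\ 5.1) asserting the equivalence, for a GV-sheaf $F$ on an Abelian variety, between (a) M-regularity of $F$ and (b) surjectivity of the sum of multiplication maps $\oplus_{\alpha \in U}H^0(A, F\otimes \alpha) \otimes H^0(A, H\otimes \alpha^{-1}) \rightarrow H^0(A, F\otimes H)$ for every locally free $IT^0$ sheaf $H$ and every open $U \subset \hat{A}$. What you have written instead is an outline of a proof of Theorem \ref{key}, the paper's main theorem about $\mathbb{P}_A(E^*)$, the base-point-freeness of $|\mathcal{O}_P(2)|$, the very ampleness of $\mathcal{O}_P(3)$, and the birationality of $\phi$. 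Nothing in your text engages with the actual content of Theorem \ref{ppglb}: you never argue either implication between (a) and (b), never use the hypothesis that $F$ is merely a GV-sheaf, and never explain why M-regularity (rather than, say, the $IT^0$ condition) is exactly the condition equivalent to the surjectivity statement. Indeed, in your part (A) and in your closing paragraph you \emph{invoke} Theorem \ref{ppglb} and Proposition \ref{cgg} as tools, which makes clear that you are treating the theorem as a black box rather than proving it.

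For the record, the paper itself offers no proof of Theorem \ref{ppglb}; it is quoted from \cite{PP}, where the argument runs through the Fourier--Mukai transform: one relates the cokernel of the multiplication map to the vanishing of certain $\mathcal{E}xt$ sheaves of $R\Phi_{\mathcal{P}}(F)$, and the codimension conditions defining M-regularity are precisely what force the relevant supports to be proper, hence avoidable by a general $\alpha \in U$. A correct submission here would either reproduce that argument or give an alternative derivation of the equivalence; a proof of the downstream Theorem \ref{key}, however interesting, is not a substitute.
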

\begin{Remark}\label{ppglb}
Modifying the proof of  Thm. 5.1 in \cite{PP}, we can show that for a GV-sheaf $F$ and a locally free $IT^0$ sheaf $H$ on $A$, the sum of the multiplication of maps of global sections
    $$\oplus_{\alpha \in \hat{A}}H^0(A, F\otimes \alpha) \otimes H^0(A, H\otimes \alpha^{-1}) \rightarrow H^0(A, F\otimes H)$$
    is surjective.
\end{Remark}

As a corollary of the theorem above, we get an interesting result.
\begin{Lemma}\label{keylemma}
Let $E$ be an $IT^0$ vector bundle on an Abelian variety $A$. Suppose that $\chi(A, E) = 1$ and that $E$ is a $(-1)_A$-invariant sheaf. Let $\varphi_E: E \rightarrow (-1)_A^*E$ be the corresponding isomorphism. Then naturally $\varphi_E \otimes \varphi_E: E \otimes E \rightarrow (-1)_A^*(E\otimes E)$ shows that $E \otimes E$ is a $(-1)_A$-invariant sheaf, thus $(-1)_A$ induces
an action $(-1)_A^*$ on $H^0(A, E \otimes E)$, moreover if denoting its invariant (resp. anti-invariant) subspace by $H^0(A, E \otimes E)^+$ (resp. $H^0(A, E \otimes E)^-$), then we have $$H^0(A, S^2E) \cong H^0(A, E \otimes E)^+  ~~and~~H^0(A, \wedge^2E) \cong H^0(A, E \otimes E)^- $$
\end{Lemma}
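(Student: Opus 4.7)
The plan is to show that the involution on $H^0(A, E \otimes E)$ induced by $\varphi_E \otimes \varphi_E$ coincides with the one induced by the factor-swap $\tau : E \otimes E \to E \otimes E$; since $H^0(A, S^2 E)$ and $H^0(A, \wedge^2 E)$ are by definition the $\pm 1$ eigenspaces of $\tau$ on $H^0(A, E \otimes E)$, the two asserted identifications then follow at once from matching the eigenspace decompositions.

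I would first normalize $\varphi_E$ so that $(-1)_A^* \varphi_E \circ \varphi_E = \text{id}_E$; this can be arranged up to an overall sign which disappears after one tensors $\varphi_E$ with itself. Under this normalization the endomorphism of $H^0(A, E \otimes E)$ induced by $\varphi_E \otimes \varphi_E$ is genuinely an involution, so the eigenspaces $H^0(A, E \otimes E)^{\pm}$ are well defined. The heart of the proof is then to produce a spanning set of $H^0(A, E \otimes E)$ on which $(-1)_A^*$ and $\tau$ act in the same way. For this I would invoke Remark \ref{ppglb} with $F = H = E$ (valid since $E$ is $IT^0$, hence also GV), which gives the surjectivity
$$ m \,:\, \bigoplus_{\alpha \in \hat{A}} H^0(A, E \otimes \alpha) \otimes H^0(A, E \otimes \alpha^{-1}) \longrightarrow H^0(A, E \otimes E), $$
so that it suffices to compare $(-1)_A^*$ and $\tau$ on the image of each summand $m_\alpha$.

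Fix $\alpha \in \hat{A}$. The hypotheses $IT^0$ and $\chi(A, E) = 1$ force $\dim H^0(A, E \otimes \alpha) = \dim H^0(A, E \otimes \alpha^{-1}) = 1$. Choosing bases $s$ and $t$ and writing $(-1)_A^* s = \lambda t$, $(-1)_A^* t = \mu s$, the normalization of $\varphi_E$ gives $\lambda \mu = 1$. Naturality of $\varphi_E \otimes \varphi_E$ yields $(-1)_A^*(m_\alpha(s \otimes t)) = m_{\alpha^{-1}}((-1)_A^* s \otimes (-1)_A^* t)$, while comparing the canonical isomorphisms $(E \otimes \alpha) \otimes (E \otimes \alpha^{-1}) \cong E \otimes E \cong (E \otimes \alpha^{-1}) \otimes (E \otimes \alpha)$ shows that $m_{\alpha^{-1}}(u \otimes v) = \tau(m_\alpha(v \otimes u))$. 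Combining these with $\lambda \mu = 1$ gives $(-1)_A^*(m_\alpha(s \otimes t)) = \tau(m_\alpha(s \otimes t))$. The $2$-torsion case is automatic, as then $m_\alpha(s \otimes s)$ is $\tau$-symmetric while $(-1)_A^*$ acts on it by $\epsilon^2 = 1$, where $(-1)_A^* s = \epsilon s$.

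The main obstacle is the careful bookkeeping of the canonical identifications $(E \otimes \alpha) \otimes (E \otimes \alpha^{-1}) \cong E \otimes E$ and how $\tau$, the naturality of $\varphi_E \otimes \varphi_E$, and the evaluation pairings $\alpha \otimes \alpha^{-1} \to \mathcal{O}_A$ interlock; once this is in place, equality of $(-1)_A^*$ and $\tau$ on the spanning image of $m$ propagates to all of $H^0(A, E \otimes E)$ by linearity, and the lemma follows from the coincidence of eigenspace decompositions.
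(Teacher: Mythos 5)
Your proof is correct and follows essentially the same route as the paper: both arguments use the surjectivity of $\oplus_{\alpha}H^0(E\otimes\alpha)\otimes H^0(E\otimes\alpha^{-1})\to H^0(E\otimes E)$ from Remark \ref{ppglb}, the one-dimensionality of each $H^0(E\otimes\alpha)$ coming from $\chi(A,E)=1$ and $IT^0$, and the cancellation $\lambda\mu=1$ (the paper's $c\cdot\tfrac{1}{c}=1$) to see that $(-1)_A^*$ acts as the factor swap on the spanning set. Your explicit normalization of $\varphi_E$ and the separate treatment of $2$-torsion $\alpha$ are points the paper leaves implicit, but the argument is the same.
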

\begin{proof}
For every $\alpha \in \hat{A}$, we can assume $H^0(A, E\otimes \alpha) = span\{e_{\alpha}\}$ since $h^0(A, E\otimes \alpha) = 1$. Applying Theorem \ref{ppglb}, $H^0(A, E \otimes E)$ is spanned by $\{e_{\alpha} \otimes e_{\alpha^{-1}}\}_{\alpha \in \hat{A}}$, and hence $H^0(A, S^2E)$ (resp. $H^0(A, \wedge^2E)$)is spanned by $\{e_{\alpha} \otimes e_{\alpha^{-1}} + e_{\alpha} \otimes e_{\alpha^{-1}} \}_{\alpha \in \hat{A}}$ (resp. $\{e_{\alpha} \otimes e_{\alpha^{-1}} - e_{\alpha} \otimes e_{\alpha^{-1}} \}_{\alpha \in \hat{A}}$). Since $(-1)_A^*e_{\alpha}$ is a section of $(-1)_A^*(E \otimes \alpha) \cong E \otimes \alpha^{-1}$ and $H^0(A, E\otimes \alpha^{-1})$ is spanned by $e_{\alpha^{-1}}$, we can write $(-1)_A^*e_{\alpha} = c e_{\alpha^{-1}}$ with $c \neq 0$, and then $(-1)_A^*e_{\alpha^{-1}} = \frac{1}{c}e_{\alpha}$, hence $(-1)_A^*(e_{\alpha} \otimes e_{\alpha^{-1}})= e_{\alpha^{-1}} \otimes e_{\alpha}$. Finally we find that
$$(-1)_A^*(e_{\alpha} \otimes e_{\alpha^{-1}} + e_{\alpha} \otimes e_{\alpha^{-1}}) = e_{\alpha} \otimes e_{\alpha^{-1}} + e_{\alpha} \otimes e_{\alpha^{-1}}$$ and
$$(-1)_A^*(e_{\alpha} \otimes e_{\alpha^{-1}} - e_{\alpha} \otimes e_{\alpha^{-1}}) = -(e_{\alpha} \otimes e_{\alpha^{-1}} - e_{\alpha} \otimes e_{\alpha^{-1}})$$
So we are done.
\end{proof}

Here we make a simple but very useful remark, which is probably well
known to experts, but we are not able to find a reference.
\begin{Theorem}\label{pf}
Let $X$ and $Y$ be two normal projective varieties, and $\mathcal{L}$ a line bundle on $X \times Y$. Assume $E= (p_2)_*\mathcal{L}$ is a vector bundle and put $P = \mathbb{P}_Y(E)$.
Then there exists an open set $U \subset Y$ such that $\mathbb{P}_U(E)$ parametrizes the divisors in $|\mathcal{L}_y|, y \in U$, correspondingly we get the universal family $\mathcal{D}_U \subset X \times U\rightarrow U$. Denote the closure of $\mathcal{D}_U$ in $X \times P$ by $\mathcal{D}$, which is embedded in $X \times P$ as a divisor. Then we have
$$\mathcal{D} \equiv p^*\mathcal{L} \otimes q^*\mathcal{O}_P(1)$$
where $p,q$ denote the two projections $p: X \times P \rightarrow X
\times Y$, $q: X \times P \rightarrow P$.
\end{Theorem}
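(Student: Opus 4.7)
The plan is to construct an explicit tautological global section of $p^*\mathcal{L}\otimes q^*\mathcal{O}_P(1)$ on $X\times P$ whose vanishing locus is exactly $\mathcal{D}$; the desired linear equivalence drops out immediately.

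First I would assemble two canonical maps. Adjunction applied to $E=(p_2)_*\mathcal{L}$ provides the evaluation morphism $p_2^*E\to\mathcal{L}$ on $X\times Y$, while our convention $\mathbb{P}_Y(E)=\mathrm{Proj}_{\mathcal{O}_Y}(\oplus_k S^k(E^*))$ supplies $P$ with the tautological subbundle inclusion $\mathcal{O}_P(-1)\hookrightarrow\pi^*E$, where $\pi:P\to Y$ denotes the structural projection. Since $p=\mathrm{id}_X\times\pi$ satisfies $p_2\circ p=\pi\circ q$, pulling these two maps back by $q$ and $p$ respectively and splicing gives a composition
$$q^*\mathcal{O}_P(-1)\;\hookrightarrow\;q^*\pi^*E\;=\;p^*p_2^*E\;\longrightarrow\;p^*\mathcal{L}$$
on $X\times P$. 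Twisting by $q^*\mathcal{O}_P(1)$ turns the source into $\mathcal{O}_{X\times P}$ and produces a global section $s\in H^0(X\times P,\,p^*\mathcal{L}\otimes q^*\mathcal{O}_P(1))$.

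Next I would verify that $s$ recovers the correct divisor on each fiber of $q$ over $\pi^{-1}(U)$. Fix $t\in P$ with $y=\pi(t)\in U$ corresponding to a line $\ell\subset E_y$. Restricting the chain above to $X\times\{t\}$ identifies $p^*\mathcal{L}|_{X\times\{t\}}$ with $\mathcal{L}_y$, identifies $q^*\pi^*E|_{X\times\{t\}}$ with the trivial bundle $E_y\otimes\mathcal{O}_X$, sees $q^*\mathcal{O}_P(-1)|_{X\times\{t\}}$ as the constant subspace $\ell\subset E_y$, and --- by the defining property of $U$, that the base-change map $E_y\to H^0(X,\mathcal{L}_y)$ is an isomorphism --- realizes the right-hand arrow as the section-evaluation map. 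Hence $s|_{X\times\{t\}}$ is, up to a nonzero scalar, the section of $\mathcal{L}_y$ determined by $\ell$, cutting out exactly the divisor of $|\mathcal{L}_y|$ parametrized by $t$.

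Finally I would identify $(s=0)$ with $\mathcal{D}$. Since $X\times P$ is integral and $s$ is nonzero, $(s=0)$ is an effective Cartier divisor, and it coincides with $\mathcal{D}_U$ over $\pi^{-1}(U)$ by the previous step, so $\mathcal{D}\subseteq(s=0)$. The only thing to rule out --- and this is the one place the argument is not purely tautological --- is a codimension-one component of $(s=0)$ of the form $X\times Z$ with $Z\subset P\setminus\pi^{-1}(U)$. I would handle this by noting that over the smooth locus $Y_{\mathrm{sm}}\subset Y$ the base-change map $E_y\to H^0(X,\mathcal{L}_y)$ remains injective (a standard consequence of regularity of $Y$ at $y$ together with local-freeness of $E$), so $s|_{X\times\{t\}}\neq 0$ whenever $\pi(t)\in Y_{\mathrm{sm}}$; and because $Y\setminus Y_{\mathrm{sm}}$ has codimension at least two in the normal variety $Y$, the same holds for its preimage in $P$ by flatness of $\pi$, leaving no room for such $Z$. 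It follows that $\mathcal{D}=(s=0)$ and hence $\mathcal{O}_{X\times P}(\mathcal{D})\cong p^*\mathcal{L}\otimes q^*\mathcal{O}_P(1)$, as desired.
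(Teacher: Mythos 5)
Your tautological section $s$ is exactly the coordinate-free form of the paper's own argument (the paper writes the same section locally as $\sum_i x_\alpha^i s_\alpha^i$ for a local basis $s_\alpha^i$ of $E$ and its dual basis $x_\alpha^i$), and your identification of $(s=0)$ with $\mathcal{D}_U$ over $\pi^{-1}(U)$ is correct. The gap is in the final step, which you rightly single out as the only non-tautological point: the claim that smoothness of $Y$ at $y$ plus local freeness of $E$ forces the base-change map $E\otimes k(y)\to H^0(X,\mathcal{L}_y)$ to be injective is false. A counterexample satisfying every hypothesis of the theorem: let $X=C$ be a smooth curve of genus $2$, $Y=\operatorname{Pic}^2(C)$ (a smooth abelian surface), and $\mathcal{L}$ a Poincar\'e bundle on $C\times Y$. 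Then $h^0(\mathcal{L}_y)=1$ except at the single point $y_0=[\omega_C]$ where it equals $2$; a minimal cohomology-and-base-change complex at $y_0$ is $A^2\xrightarrow{(f,g)}A$ with $(f,g)$ an $\mathfrak{m}$-primary regular sequence in the $2$-dimensional regular local ring $A$, so $E=\ker(f,g)=A\cdot(g,-f)$ is free of rank one, yet the base-change map $E\otimes k(y_0)\to H^0(\mathcal{L}_{y_0})\cong k^2$ sends the generator to $(\bar g,-\bar f)=(0,0)$ --- it is the zero map at a smooth point. So your argument, as written, does not exclude vertical components of $(s=0)$ lying over the degeneracy locus of base change, which a priori could be a divisor inside $Y_{\mathrm{sm}}$.

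The fix is local and is precisely where normality of $Y$ enters. A vertical component of $(s=0)$ must have the form $X\times\pi^{-1}(W)$ for a prime divisor $W\subset Y$ on which the evaluation $E\otimes k(y)\to H^0(X,\mathcal{L}_y)$ vanishes identically (a divisor $Z\subset P$ with $X\times Z\subset(s=0)$ and $\pi(Z)$ of codimension $\geq 1$ is forced by dimension count to be a full preimage $\pi^{-1}(W)$). Hence it suffices to prove non-vanishing of base change at the generic point $\eta_W$ rather than at all smooth closed points. There $\mathcal{O}_{Y,\eta_W}$ is a DVR by normality; in the local complex $K^0\xrightarrow{d}K^1$ the module $\operatorname{im}(d)\subset K^1$ is torsion-free, hence free over a DVR, hence $\operatorname{Tor}_1(\operatorname{im}(d),k(\eta_W))=0$, which is exactly the obstruction to injectivity of $\ker(d)\otimes k(\eta_W)\to\ker(d\otimes k(\eta_W))$. (Alternatively: if the counit $p_2^*E\to\mathcal{L}$ vanished along $X\times W$ near $\eta_W$ it would factor through $\mathcal{L}\otimes p_2^*\mathcal{O}(-W)$, and pushing forward would make $\mathrm{id}_E$ factor through $E(-W)\subsetneq E$, which is absurd.) With that substitution your proof closes; note that the paper itself dismisses the vertical components only by positing a local basis of $E$ whose divisors share no vertical component, which tacitly requires the same non-vanishing statement.
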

\begin{proof}
Let $\{U_\alpha \}_{\alpha \in I}$ be an affine cover of $Y$. Assume $E$ is of rank $n$ and $E(U_\alpha) = \mathcal{O}_Y(U_\alpha)(s_{\alpha}^1, s_{\alpha}^2, ..., s_{\alpha}^n)$ where $s_{\alpha}^i \in H^0(X \times U_\alpha, \mathcal{L}) \cong E(U_\alpha)$, and the divisors $(s_{\alpha}^i)$ have no common vertical components (where a vertical divisor means an effective divisor pulled back via $p_2$); and denote by $x_{\alpha}^i \in E^*(U_\alpha), i=1,2,...,n$ the dual basis of $\{s_{\alpha}^i\}_{i=1,2,...,n}$. Then $x_{\alpha}^1,...x_{\alpha}^n$ can be seen as a coordinate of $E(U_\alpha)$.

The equation $\sum_i x_{\alpha}^i s_{\alpha}^i = 0$ defines a divisor $\mathcal{D}'_\alpha \subset X \times \mathbb{P}_{U_\alpha}(E_{U_\alpha})$,  which coincides with $\mathcal{D}$ if restricted to the open set $X \times \mathbb{P}_{U_\alpha\cap U}(E)$. We patch together the $\mathcal{D}'_\alpha$'s and get a divisor $\mathcal{D}'$. By assumption, the divisor $\mathcal{D}'$ has no vertical part, hence $\mathcal{D}'=\mathcal{D}$. By construction, $\mathcal{D}' \equiv p^*\mathcal{L} \otimes q^*\mathcal{O}_P(1)$, and we are done.
\end{proof}

\section{Proof of Theorem \ref{key}}\label{pfkey}
This section is devoted to prove Theorem \ref{key}.

Let $\hat{P} =
\mathbb{P}_{\hat{A}}(R\Phi_{\mathcal{P}}(E))$, $n = rank(E)$ and $m
= rank(R\Phi_{\mathcal{P}}(E))$. $\pi: P \rightarrow A$ and $\hat{\pi}: \hat{P} \rightarrow \hat{A}$ denote the natural projections, which coincide with their Albanese maps respectively. We denote by $\tilde{\mathcal{P}}$
the pull-back $(\pi \times \hat{\pi})^*\mathcal{P}$ of the Poincar$\acute{e}$ bundle on $A \times \hat{A}$.

By Corollary \ref{Muc}, we conclude that
$$(p_2)_*(p_1^*\mathcal{O}_P(1)\otimes \mathcal{P}_{\pi}) \cong R\Phi_{\mathcal{P}}(\pi_*\mathcal{O}_P(1)) \cong R\Phi_{\mathcal{P}}(E)\cong R^0\Phi_{\mathcal{P}}(E)$$
and
$$(p_1)_*(p_2^*\mathcal{O}_{\hat{P}}(1)\otimes \mathcal{P}_{\hat{\pi}}) \cong R\Psi_{\mathcal{P}}(\hat{\pi}_*\mathcal{O}_{\hat{P}}(1)) \cong R\Psi_{\mathcal{P}}(R\Phi_{\mathcal{P}}(E)^*) \cong E^*$$
where the maps $p_1, p_2$, in the first equation denote the projections from $P \times \hat{A}$ to $P$ and $\hat{A}$ respectively, and in the second equation denote the projections from $A \times \hat{P}$ to $A$ and $\hat{P}$ respectively.
We identify $P$ (resp. $\hat{P}$) with the Hilbert scheme parametrizing the divisors in $\{|\mathcal{O}_{\hat{P}}(1)\otimes \hat{\alpha}||\hat{\alpha} \in Pic^0(\hat{P}) = A\}$ (resp. $\{|\mathcal{O}_{P}(1)\otimes \alpha||\alpha \in Pic^0(P) = \hat{A}\}$), and denote by
$\mathcal{Y} \subset P \times \hat{P}$ the universal family. Using Theorem \ref{pf} we get
\begin{equation}\label{Y}
\mathcal{Y} \equiv p_1^* \mathcal{O}_P(1) \otimes p_2^*\mathcal{O}_{\hat{P}}(1) \otimes \tilde{\mathcal{P}}
\end{equation}

Immediately from Eq. \ref{Y}, it follows that for $x \in P$
$$\mathcal{Y}_x\equiv (p_1^* \mathcal{O}_P(1) \otimes p_2^*\mathcal{O}_{\hat{P}}(1) \otimes \tilde{\mathcal{P}})_{x}\equiv \mathcal{O}_{\hat{P}}(1) \otimes \mathcal{P}_{\pi(x)}$$
Since $R^i\hat{\pi}_*\mathcal{O}_{\hat{P}}(1) = 0$ for $i>0$ and $\hat{\pi}_*\mathcal{O}_{\hat{P}}(1)$ is an $IT^0$ sheaf, we have $h^i(\hat{P}, \mathcal{O}_{\hat{P}}(1)\otimes \mathcal{P}_a) = 0$ for $i>0, a\in A$, and remark that
\begin{itemize}
\item[$\diamondsuit$]{identifying a divisor in $|\mathcal{O}_{P}(1)\otimes \alpha|,\alpha \in \hat{A}$ with a point in $\hat{P}$, for every $x \in P$, $\mathcal{Y}_x \equiv \mathcal{O}_{\hat{P}}(1) \otimes \mathcal{P}_{\pi(x)}$ parametrizes all those divisors passing through $x$; and for every divisor $Y \equiv \mathcal{O}_{\hat{P}}(1) \otimes \mathcal{P}_a, a\in A$ (equivalently, topologically equivalent to $\mathcal{Y}_x$ for some $x \in P$), there exists unique $y \in P$ such that $\mathcal{Y}_y = Y$.}
\end{itemize}
We conclude that for $x, y \in P$,
$$\mathcal{Y}_x \equiv \mathcal{Y}_y \Leftrightarrow \pi(x) = \pi(y), \mathcal{Y}_x = \mathcal{Y}_y \Leftrightarrow x = y$$
And we can write that
\begin{equation}\label{dec}
\mathcal{Y}_x = \mathcal{H}_x + \mathcal{V}_x ~\text{and}~
\mathcal{V}_x= \mathcal{V}^1_x + ... + \mathcal{V}^r_x
\end{equation}
where $\mathcal{H}_x$ is
the horizontal part (if $m = 1$ then $\mathcal{H}_x = \emptyset$), $\mathcal{V}_x = \hat{\pi}^*V_x$ is the vertical part ($\mathcal{V}_x = \emptyset$ if $\mathcal{Y}_x$ is irreducible), and the
$\mathcal{V}^i_x = \hat{\pi}^*V^i_x$'s are the reduced and
irreducible vertical components (two of them may equal). Here we remark that
\begin{itemize}
\item[$\heartsuit$]{as the divisor $V_x$ varies continuously in $\hat{A}$, the divisor $\mathcal{Y}_x = \mathcal{H}_x + \mathcal{V}_x$ varies in the same topological class, correspondingly $x$ varies continuously in $P$ by $\diamondsuit$, so if $x$ is general, we can assume that the divisors $V_x^1, ..., V_x^r, (-1)_{\hat{A}}^*V_x^1,..., (-1)_{\hat{A}}^*V_x^r$ are distinct to each other.}
\end{itemize}

\begin{Remark}\label{lg}
Suppose that for general $x \in P$, $\mathcal{Y}_x$ is reducible. Then there exist an open set $U \subset P$ and two divisors $\mathcal{H}$ and $\mathcal{V}$ on $U \times \hat{P}$, such that for $x \in U$, $\mathcal{H}_x$ (resp. $\mathcal{V}_x$) defined above coincides with the fiber of $\mathcal{H}$ (resp. $\mathcal{V}$) over $x$. We also denote the closure of the two divisors in $P \times \hat{P}$ by $\mathcal{H}$ and $\mathcal{V}$. Then since $\mathcal{Y}_x$ is a divisor in $\hat{P}$ for every $x \in P$, so $\mathcal{Y}$ contains no component which is the pull-back of a divisor in $P$ via the projection $P \times \hat{P} \rightarrow P$, thus $\mathcal{Y}= \mathcal{H} + \mathcal{V}$.
\end{Remark}

\subsection{Base points and the degree of $\phi$.}

\begin{Proposition}\label{base}
$|\mathcal{O}_P(2)|$ is base point free, and $\mathcal{O}_P(3)$ is very ample.
\end{Proposition}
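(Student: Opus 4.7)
The plan is to reduce everything to the continuous global generation of $\mathcal{O}_P(1)$ with respect to $\pi : P \to A$ and then to apply Proposition~\ref{gg} iteratively.

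First, since $E$ is $IT^0$ the loci $V^i(E)$ are empty for $i>0$, so $E$ is $M$-regular and hence CGG by Proposition~\ref{cgg}. Combining the tautological surjection $\pi^*E \twoheadrightarrow \mathcal{O}_P(1)$ with the projection-formula identification $H^0(P,\mathcal{O}_P(1)\otimes \pi^*\alpha)=H^0(A,E\otimes \alpha)$, this promotes to: $\mathcal{O}_P(1)$ is CGG with respect to $\pi$. Applying Proposition~\ref{gg} with $F=L=\mathcal{O}_P(1)$ then shows that $\mathcal{O}_P(2)\otimes \pi^*\alpha$ is globally generated for every $\alpha\in\hat{A}$, which for $\alpha=0$ yields base point freeness of $|\mathcal{O}_P(2)|$. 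The same conclusion makes $\mathcal{O}_P(2)$ itself CGG with respect to $\pi$, so a second application of Proposition~\ref{gg} with $F=\mathcal{O}_P(2)$ and $L=\mathcal{O}_P(1)$ gives global generation of $\mathcal{O}_P(3)\otimes \pi^*\alpha$ for every $\alpha$, and in particular of $\mathcal{O}_P(3)$.

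For the very ampleness of $\mathcal{O}_P(3)$, I would exploit the factorisation $\mathcal{O}_P(3)=(\mathcal{O}_P(1)\otimes \pi^*\alpha)\otimes (\mathcal{O}_P(2)\otimes \pi^*\alpha^{-1})$ and build separating sections as products $s_1\cdot s_2$. Given distinct points $x,y\in P$, a factor $s_1\in H^0(P,\mathcal{O}_P(1)\otimes \pi^*\alpha)$ with $s_1(y)=0$ and $s_1(x)\ne 0$ is produced from the CGG of $\mathcal{O}_P(1)$ when $\pi(x)\ne \pi(y)$; when $\pi(x)=\pi(y)=a$ one uses instead that for generic $\alpha$ the evaluation $H^0(A,E\otimes \alpha)\twoheadrightarrow E_a$ is surjective, and prescribes the image to lie in $\ker(E_a\to \ell_y^*)\setminus \ker(E_a\to \ell_x^*)$, where $\ell_x,\ell_y\subset E_a^*$ are the lines corresponding to $x,y$. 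A factor $s_2\in H^0(P,\mathcal{O}_P(2)\otimes \pi^*\alpha^{-1})$ with $s_2(x)\ne 0$ exists by the global generation established above. Tangent separations at $x$ are handled analogously, by imposing a first-order vanishing condition on $s_1$ realised through CGG combined with the relative very ampleness of $\mathcal{O}_P(1)$ along the fibres of $\pi$.

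The main obstacle I expect is this last step: one has to verify that the CGG of $\mathcal{O}_P(1)$ is flexible enough to prescribe simultaneously a vanishing locus and a jet direction, which amounts to coupling the CGG of $E$ on $A$ with the fibrewise separation property of $\mathcal{O}_P(1)|_{\mathbb{P}^{n-1}}=\mathcal{O}(1)$ and some genericity in $\alpha\in\hat{A}$.
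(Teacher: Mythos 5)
Your argument for base point freeness is correct and takes a slightly different route from the paper's: the paper exhibits, for each $x$, an explicit divisor $H_\alpha+H_{\alpha^{-1}}\in|\mathcal{O}_P(2)|$ avoiding $x$ by means of the universal divisor $\mathcal{Y}_x$, whereas your double application of Proposition \ref{gg} to $F=L=\mathcal{O}_P(1)$ is clean and even yields the stronger statement that $\mathcal{O}_P(2)\otimes\pi^*\alpha$ is globally generated for every $\alpha$.

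The very ampleness part, however, has genuine gaps. First, the fibrewise step is false as stated: since $E$ is $IT^0$, $h^0(A,E\otimes\alpha)=\chi(E)$ for \emph{every} $\alpha$, and $\chi(E)$ may well be smaller than $\mathrm{rank}(E)$ (e.g.\ $\chi(E)=1$), so for no single $\alpha$ is the evaluation $H^0(A,E\otimes\alpha)\to E_a$ surjective; CGG only gives surjectivity after summing over $\alpha$ in an open set, and such a sum cannot be packaged into one product section $s_1\cdot s_2$. Second, and more fundamentally, CGG of $\mathcal{O}_P(1)$ gives no control over prescribed vanishing: when $\chi(E)=1$ each $|\mathcal{O}_P(1)\otimes\alpha|$ consists of a single divisor, so there is no freedom to arrange $s_1(y)=0$ and $s_1(x)\neq 0$, and the jet conditions needed for tangent separation --- exactly the step you defer --- are the real content. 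The paper closes all of these gaps at once by working with the ideal sheaf: from the sequence $0\to\pi_*(I_x\otimes\mathcal{O}_P(1))\to E\to\mathbb{C}(\pi(x))\to 0$ and its Fourier--Mukai transform one checks that $\pi_*(I_x\otimes\mathcal{O}_P(1))$ is a GV-sheaf; Proposition \ref{it0} then makes $\pi_*(I_x\otimes\mathcal{O}_P(1))\otimes E$ an $IT^0$, hence M-regular, hence CGG sheaf, so $I_x\otimes\mathcal{O}_P(2)$ is CGG w.r.t.\ $\pi$; a final application of Proposition \ref{gg} with $L=\mathcal{O}_P(1)$ gives global generation of $I_x\otimes\mathcal{O}_P(3)$ for every $x$, which is the standard criterion for separating points and tangent vectors. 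You should replace the product-of-sections construction by this argument, or otherwise establish the continuous global generation of $I_x\otimes\mathcal{O}_P(2)$.
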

\begin{proof}
Fixing a point $x \in P$. Since $\mathcal{Y}_x$ is a divisor on $P$,
for a general $\alpha \in \hat{A}$, we can find $H_\alpha \in
|\mathcal{O}_P(1)\otimes \alpha|$ and $H_{\alpha^{-1}} \in
|\mathcal{O}_P(1)\otimes \alpha^{-1}|$ such that neither $H_\alpha$
nor $H_{\alpha^{-1}}$ is contained in $\mathcal{Y}_x$, which means $x$
is not contained in $H_\alpha + H_{\alpha^{-1}}$ by $\diamondsuit$. Then we can see that
$|\mathcal{O}_P(2)|$ has no base point.

Let $x \in P$, denote by $I_x$ its ideal sheaf. Consider the following exact sequence
$$0 \rightarrow I_x \otimes \mathcal{O}_P(1) \rightarrow \mathcal{O}_P(1) \rightarrow \mathbb{C}(x) \rightarrow 0$$
Applying $\pi_*$ to the sequence above we obtain an exact sequence on $A$
$$0 \rightarrow \pi_*(I_x \otimes \mathcal{O}_P(1)) \rightarrow E \rightarrow \mathbb{C}(\pi(x)) \rightarrow 0$$
Then applying $R\Phi_{\mathcal{P}}$ to the sequence above, we get
$$0 \rightarrow R^0\Phi_{\mathcal{P}}(\pi_*(I_x \otimes \mathcal{O}_P(1))) \rightarrow R^0\Phi_{\mathcal{P}}(E) \rightarrow \mathcal{P}_{\pi(x)} \rightarrow R^1\Phi_{\mathcal{P}}(\pi_*(I_x \otimes \mathcal{O}_P(1))) \rightarrow 0$$
and that $R^i\Phi_{\mathcal{P}}(\pi_*(I_x \otimes \mathcal{O}_P(1))) = 0$ if $i > 1$ since then $R^i\Phi_{\mathcal{P}}(E) = R^i\Phi_{\mathcal{P}}(\mathbb{C}(\pi(x))) = 0$.

Restricting $\mathcal{Y}_x$ to a general fiber of $\hat{P} \rightarrow A$, it is a hyperplane, so
$$rank(R^0\Phi_{\mathcal{P}}(\pi_*(I_x \otimes \mathcal{O}_P(1)))) = rank(R^0\Phi_{\mathcal{P}}(E)) - 1$$ And since $rank(\mathcal{P}_{\pi(x)})=1$, we conclude that $codim_{\hat{A}}(Supp( R^1\Phi_{\mathcal{P}}(\pi_*(I_x \otimes \mathcal{O}_P(1)))))  \geq 1$,
so $\pi_*(I_x \otimes \mathcal{O}_P(1))$ is a GV-sheaf. Proposition \ref{it0} tells that $\pi_*(I_x \otimes \mathcal{O}_P(1)) \otimes E$ satisfies $IT^0$, thus is CGG by Proposition \ref{cgg}. Since both the natural maps
$$\pi_*(I_x \otimes \mathcal{O}_P(1)) \otimes E \rightarrow \pi_*(I_x \otimes \mathcal{O}_P(2))$$
and
$$\pi^*(\pi_*(I_x \otimes \mathcal{O}_P(2))) \rightarrow I_x \otimes \mathcal{O}_P(2)$$
are surjective, $I_x \otimes \mathcal{O}_P(2)$ is CGG w.r.t. $\pi$. Note that $\mathcal{O}_P(1)$ is CGG w.r.t. $\pi$ since $\pi_*\mathcal{O}_P(1)\cong E$ is CGG and $\pi^*E\rightarrow \mathcal{O}_P(1)$ is surjective. Using Proposition \ref{gg}, it follows that $I_x \otimes \mathcal{O}_P(3)$ is globally generated. Therefore, the line bundle $\mathcal{O}_P(3)$ is very ample.
\end{proof}

\begin{Lemma}\label{spr}
Let $x,y \in P$ be two distinct points. Write that $\mathcal{Y}_x = \mathcal{H}_x +
\mathcal{V}_x $ and $\mathcal{Y}_y = \mathcal{H}_y +
\mathcal{V}_y$ as in \ref{dec}. Then the following conditions are equivalent
\begin{itemize}
\item[(a)]{$|\mathcal{O}_P(2)|$ fails to separate $x,y$;}
\item[(b)]{$\mathcal{H}_x= \mathcal{H}_y$ and $Supp(V_x +(-1)_{\hat{A}}^*V_x) = Supp(V_y +(-1)_{\hat{A}}^*V_y)$.}
\end{itemize}
\end{Lemma}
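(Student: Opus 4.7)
The plan is to translate both conditions into the geometry of the divisors $\mathcal{Y}_x,\mathcal{Y}_y\subset\hat{P}$, using the spanning of $H^0(P,\mathcal{O}_P(2))$ by decomposable sections. Applying the remark following Theorem \ref{ppglb} to $F=H=E$ (both are $IT^0$, hence GV and locally free $IT^0$), the sum $\bigoplus_{\alpha\in\hat{A}}H^0(E\otimes\alpha)\otimes H^0(E\otimes\alpha^{-1})\to H^0(E\otimes E)$ is surjective; since $E\otimes E$ is $IT^0$ by Proposition \ref{it0}, so are its direct summands $S^2E$ and $\wedge^2E$, and therefore $H^0(E\otimes E)\twoheadrightarrow H^0(S^2E)=H^0(P,\mathcal{O}_P(2))$ is surjective. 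Hence $H^0(\mathcal{O}_P(2))$ is spanned by products $s\cdot t$ with $s\in H^0(\mathcal{O}_P(1)\otimes\alpha)$, $t\in H^0(\mathcal{O}_P(1)\otimes\alpha^{-1})$. By $\diamondsuit$ and (\ref{dec}), the decomposable divisor $(s)+(t)=D_\alpha+D_{\alpha^{-1}}$ passes through $x$ if and only if $[D_\alpha]\in\mathcal{H}_x$, $[D_{\alpha^{-1}}]\in\mathcal{H}_x$, or $\alpha\in\mathrm{Supp}(V_x+(-1)_{\hat{A}}^*V_x)$.

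For $(b)\Rightarrow(a)$, the assumptions in (b) immediately show that the above trichotomy for $x$ matches that for $y$ on every decomposable divisor. More precisely, the equality $\mathcal{H}_x=\mathcal{H}_y$ yields, for generic $\alpha\notin V_x\cup V_y$, coincidence of the proper hyperplanes $\mathcal{Y}_x\cap\hat{\pi}^{-1}(\alpha)=\mathcal{Y}_y\cap\hat{\pi}^{-1}(\alpha)$ inside $\mathbb{P}^{m-1}$; equivalently, the evaluation functionals $\mathrm{ev}_x^{(\alpha)},\mathrm{ev}_y^{(\alpha)}\colon H^0(E\otimes\alpha)\to\mathbb{C}$ satisfy $\mathrm{ev}_x^{(\alpha)}=\mu_\alpha\mathrm{ev}_y^{(\alpha)}$ for some scalar $\mu_\alpha$. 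The scalars $\mu_\alpha$ assemble into a rational section of the degree-zero line bundle $\mathcal{P}|_{\{\pi(x)-\pi(y)\}\times\hat{A}}$ on $\hat{A}$, and hence the product $\mu_\alpha\mu_{\alpha^{-1}}$ is a rational section of the trivial bundle, so a single constant $\mu$. Together with the support condition of (b), which treats the cases where $\alpha$ or $\alpha^{-1}$ lies in $V_x\cup V_y$, this gives $(s\cdot t)(x)=\mu\cdot(s\cdot t)(y)$ uniformly for every decomposable section, and by spanning we obtain $\mathrm{ev}_x=\mu\cdot\mathrm{ev}_y$ on $H^0(\mathcal{O}_P(2))$, i.e., $\phi(x)=\phi(y)$.

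For $(a)\Rightarrow(b)$, condition (a) means every divisor in $|\mathcal{O}_P(2)|$, in particular every decomposable one, contains $x$ if and only if it contains $y$. For $\alpha\notin V_x\cup V_y$, choose $D_\alpha\in|\mathcal{O}_P(1)\otimes\alpha|$ outside the two proper hyperplanes $\mathcal{Y}_x\cap\hat{\pi}^{-1}(\alpha),\mathcal{Y}_y\cap\hat{\pi}^{-1}(\alpha)$; the biconditional $x\in D_\alpha+D_{\alpha^{-1}}\iff y\in D_\alpha+D_{\alpha^{-1}}$ then forces the hyperplanes over $\alpha^{-1}$ to coincide for $x$ and $y$, and Zariski closure in $\alpha$ yields $\mathcal{H}_x=\mathcal{H}_y$. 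For the vertical part, if $\alpha\in V_x\setminus V_y$ while $\alpha^{-1}\notin V_y$, then every $D_\alpha$ contains $x$ while one can pick $D_{\alpha^{-1}}$ avoiding $y$, forcing $y\in D_\alpha$ for all $D_\alpha$, i.e., $\alpha\in V_y$ --- contradiction. Hence $V_x\subseteq V_y\cup(-1)_{\hat{A}}^*V_y$, and by symmetry and pulling back via $(-1)_{\hat{A}}^*$ one obtains $\mathrm{Supp}(V_x+(-1)_{\hat{A}}^*V_x)=\mathrm{Supp}(V_y+(-1)_{\hat{A}}^*V_y)$. The principal obstacle is the line-bundle argument used in $(b)\Rightarrow(a)$: mere vanishing agreement of decomposable sections is insufficient to conclude $\phi(x)=\phi(y)$, and identifying the $\mu_\alpha$ as a section of the specific degree-zero line bundle $\mathcal{P}|_{\{\pi(x)-\pi(y)\}\times\hat{A}}$ is what forces $\mu_\alpha\mu_{\alpha^{-1}}$ to be constant.
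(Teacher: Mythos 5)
Your direction $(a)\Rightarrow(b)$ is essentially the paper's argument: you use decomposable divisors $D_\alpha+D_{\alpha^{-1}}$ to force the horizontal hyperplanes to agree over general $\alpha$, and a pointwise version of the paper's component-by-component argument for the vertical parts (the paper phrases it via irreducibility of the $\mathcal{V}^i_x$, but the conclusion on supports is the same). That half is fine.

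The direction $(b)\Rightarrow(a)$ contains a genuine gap, and it sits exactly where you flag "the principal obstacle." You correctly identify that the ratio $\mu_\alpha=\mathrm{ev}_x^{(\alpha)}/\mathrm{ev}_y^{(\alpha)}$ is a \emph{rational} section of the degree-zero line bundle $\mathcal{P}|_{\{\pi(x)-\pi(y)\}\times\hat{A}}$, so that $\mu_\alpha\mu_{\alpha^{-1}}$ is a rational section of $\mathcal{O}_{\hat{A}}$. But a rational section of the trivial bundle is just a rational function on $\hat{A}$, and rational functions are not constant: $\mu_\alpha\mu_{\alpha^{-1}}$ is only known to be regular and nonvanishing on the complement of $\mathrm{Supp}(V_x+(-1)_{\hat{A}}^*V_x)$, and a unit on the complement of a divisor may perfectly well have zeros and poles along that divisor (on an elliptic curve, $f(z)f(-z)$ for $f$ a nonconstant rational function is a standard example). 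To force constancy you would have to show that the divisor of $\mu_\alpha\mu_{\alpha^{-1}}$ actually vanishes, which amounts to controlling the order of vanishing of $\mathrm{ev}_x$ and $\mathrm{ev}_y$ along the vertical components --- information that hypothesis (b), which is only an equality of \emph{supports}, does not obviously provide. So as written the uniform scalar $\mu$ is not established, and without it the spanning of $H^0(\mathcal{O}_P(2))$ by decomposables does not yield $\phi(x)=\phi(y)$.

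The paper sidesteps the scalar entirely by a different spanning statement: since $\pi_*(I_x\otimes\mathcal{O}_P(1))$ is a GV-sheaf (shown in the proof of Proposition \ref{base}) and $\pi_*(I_x\otimes\mathcal{O}_P(1))\otimes E\to\pi_*(I_x\otimes\mathcal{O}_P(2))$ is surjective, Remark \ref{ppglb} shows that the subspace $H^0(I_x\otimes\mathcal{O}_P(2))$ of sections vanishing at $x$ is spanned by decomposables $e_\alpha\otimes f_{\alpha^{-1}}$ with $e_\alpha$ \emph{already vanishing at} $x$. One then only needs the implication $x\in H_\alpha+H_{\alpha^{-1}}\Rightarrow y\in H_\alpha+H_{\alpha^{-1}}$ (which follows from (b) exactly as in your vertical-part analysis) to see that each spanning section vanishes at $y$, hence every member of $|\mathcal{O}_P(2)|$ through $x$ passes through $y$. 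If you want to keep your architecture, the fix is to replace "span all of $H^0(\mathcal{O}_P(2))$ and match evaluation functionals up to a scalar" by "span the sections vanishing at $x$ by decomposables vanishing at $x$," which is precisely the extra use of the Pareschi--Popa machinery you are missing.
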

\begin{proof}
First we show $(a)\Rightarrow (b)$. Now suppose that $|\mathcal{O}_P(2)|$ fails to separate $x,y$.

We claim that $\mathcal{H}_x = \mathcal{H}_y$ if $m >1$. Indeed,
otherwise for a general $\alpha \in \hat{A}$, we can find $H_\alpha
\in |\mathcal{O}_P(1)\otimes \alpha|, H_{\alpha^{-1}} \in
|\mathcal{O}_P(1)\otimes \alpha^{-1}|$ such that $H_\alpha \in \mathcal{Y}_x$ and neither $H_{\alpha}$ nor $H_{\alpha^{-1}}$ is contained in $\mathcal{Y}_y$. So $H_\alpha +
H_{\alpha^{-1}} \in |\mathcal{O}_P(2)|$ contains $x$ but $y$ by $\diamondsuit$, therefore $|\mathcal{O}_P(2)|$ separates
$x,y$.

Let $\alpha \in V_x$. Then $x \in
Bs|\mathcal{O}_P(1)\otimes \alpha|$. Let $\mathcal{V}^i_x = \hat{\pi}^*V^i_x$ be an irreducible component of $\mathcal{Y}_x$. Then for any $\alpha \in V^i_x$, any $H_\alpha \in |\mathcal{O}_P(1) \otimes \alpha| \subset \mathcal{V}^i_x$ and $H_{\alpha^{-1}} \in |\mathcal{O}_P(1) \otimes \alpha^{-1}| \subset \hat{\pi}^*(-1)_{\hat{A}}^*V^i_x$, since $x \in H_\alpha + H_{\alpha^{-1}}$, we have $y \in H_\alpha + H_{\alpha^{-1}}$, so either $H_\alpha \in \mathcal{Y}_y$ or $H_{\alpha^{-1}} \in \mathcal{Y}_y$. Then we conclude that $\mathcal{V}^i_x \subset  \mathcal{Y}_y$ or $\hat{\pi}^*(-1)_{\hat{A}}^*V^i_x \subset  \mathcal{Y}_y$ since $\mathcal{V}^i_x$ is irreducible, so it follows that
$$Supp(V_x) \subset Supp(V_y + (-1)_{\hat{A}}^*V_y)~and~Supp((-1)_{\hat{A}}^*V_x) \subset Supp(V_y + (-1)_{\hat{A}}^*V_y)$$
thus
$$Supp(V_x+ (-1)_{\hat{A}}^*V_x) \subset Supp(V_y + (-1)_{\hat{A}}^*V_y)$$
In the same way, we show that $Supp(V_x+ (-1)_{\hat{A}}^*V_x) \supset Supp(V_y + (-1)_{\hat{A}}^*V_y)$, so one direction follows.

Now we show $(b)\Rightarrow (a)$, so assume $(b)$.

Note that $\pi_*(I_x \otimes \mathcal{O}_P(1))$ is a GV-sheaf which is proved during the proof of Proposition \ref{base}. Again since the natural map
$$\pi_*(I_x \otimes \mathcal{O}_P(1)) \otimes E \rightarrow \pi_*(I_x \otimes \mathcal{O}_P(2))$$
is surjective, using Remark \ref{ppglb}, we conclude that $H^0(A, I_x \otimes \mathcal{O}_P(2))$ is spanned by the elements in the set
$$\{e_\alpha \otimes f_{\alpha^{-1}} \in H^0(P,\mathcal{O}_P(2))|\alpha \in \hat{A}, e_\alpha \in H^0(P, I_x \otimes\mathcal{O}_P(1) \otimes \alpha), f_{\alpha^{-1}} \in H^0(P,\mathcal{O}_P(1) \otimes \alpha^{-1})\}$$

On the other hand reversing the argument when proving $(a)\Rightarrow (b)$, we can prove that
$$x \in H_\alpha + H_{\alpha^{-1}}\Rightarrow y \in H_\alpha + H_{\alpha^{-1}}$$
where $H_\alpha
\in |\mathcal{O}_P(1)\otimes \alpha|, H_{\alpha^{-1}} \in
|\mathcal{O}_P(1)\otimes \alpha^{-1}|$. Then by the fact that $$(e_\alpha \otimes f_{\alpha^{-1}}) \in |\mathcal{O}_P(1) \otimes \alpha| + |\mathcal{O}_P(1) \otimes \alpha^{-1}| \subset |\mathcal{O}_P(2)|$$
we conclude that every divisor in $|\mathcal{O}_P(2)|$ containing $x$ contains $y$. So this direction follows, and we are done.
\end{proof}

\begin{Corollary}\label{dgr}
Let $x$ be a general point, and write that
$\mathcal{Y}_x = \mathcal{H}_x + \mathcal{V}^1_x + ... +
\mathcal{V}^r_x$ as in \ref{dec}. Then the degree of $\phi$ is $2^r$. In
particular, $\phi$ is birational if and only if for a general $x \in P$, $\mathcal{Y}_x$ is irreducible.
\end{Corollary}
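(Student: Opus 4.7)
The plan is to compute the cardinality of a general fibre of $\phi$ using Lemma \ref{spr}, which characterises when two distinct points fail to be separated by $|\mathcal{O}_P(2)|$.

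First, I would fix a general $x \in P$ and apply Lemma \ref{spr}: the points $y \in P$ with $\phi(y) = \phi(x)$ are exactly those with $\mathcal{H}_y = \mathcal{H}_x$ and $Supp(V_y + (-1)_{\hat{A}}^*V_y) = Supp(V_x + (-1)_{\hat{A}}^*V_x)$. By the genericity condition $\heartsuit$, for general $x$ the $2r$ divisors $V^1_x, \ldots, V^r_x, (-1)_{\hat{A}}^*V^1_x, \ldots, (-1)_{\hat{A}}^*V^r_x$ are pairwise distinct and irreducible, and pair up into $r$ two-element orbits of the involution $(-1)_{\hat{A}}^*$. Since the components of $V_y$ must lie among the irreducible components of this support, and $V_y$ together with $(-1)_{\hat{A}}^*V_y$ has to recover the full support, $V_y$ must pick exactly one representative from each of the $r$ pairs, giving $2^r$ combinatorial choices for $V_y$ (one of which is $V_y = V_x$).

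Second, I would verify that each combinatorial choice actually arises from a genuine $y \in P$. By $\diamondsuit$, this reduces to checking that the candidate divisor $\mathcal{H}_x + \mathcal{V}_y$ lies in the linear class $\mathcal{O}_{\hat{P}}(1) \otimes \hat{\pi}^*\mathcal{P}_a$ for some $a \in A$. The essential input is that $(-1)_{\hat{A}}^*L \otimes L^{-1} \in Pic^0(\hat{A})$ for any line bundle $L$ on $\hat{A}$, since this line bundle is antisymmetric; identifying $Pic^0(\hat{A}) \cong A$ via the Poincar\'e bundle, each swap $V^j_x \leftrightarrow (-1)_{\hat{A}}^*V^j_x$ only twists $\mathcal{Y}_x$ by a factor of the form $\hat{\pi}^*\mathcal{P}_b$, keeping the divisor in the required class. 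I expect this is the main technical point, since Lemma \ref{spr} supplies only a support coincidence, which is a priori weaker than realising the swapped divisor as coming from a point of the parameter space $P$.

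Finally, combining the two steps yields $|\phi^{-1}(\phi(x))| = 2^r$, hence $\deg(\phi) = 2^r$. For the last assertion, $\phi$ is birational iff $r = 0$, i.e., iff $\mathcal{V}_x = 0$ and $\mathcal{Y}_x = \mathcal{H}_x$. To see this is equivalent to $\mathcal{Y}_x$ being irreducible, I would restrict $\mathcal{Y}_x \equiv \mathcal{O}_{\hat{P}}(1) \otimes \mathcal{P}_{\pi(x)}$ to a general fibre of $\hat{\pi}$, where it becomes a hyperplane in $\mathbb{P}^{m-1}$; this forces the horizontal part $\mathcal{H}_x$ to be a single reduced irreducible divisor, so the conditions $r = 0$ and ``$\mathcal{Y}_x$ irreducible'' coincide.
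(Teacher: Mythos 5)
Your proposal is correct and follows essentially the same route as the paper: both count the fibre of $\phi$ by combining Lemma \ref{spr} with the genericity assumption $\heartsuit$ to get $2^r$ combinatorial candidates for $V_y$, and both realise each candidate via the observation that $(-1)_{\hat{A}}^*V^i_x - V^i_x \in Pic^0(\hat{A})$ together with $\diamondsuit$. Your treatment of the final "birational iff irreducible" assertion is in fact more explicit than the paper's, which dismisses it as following easily.
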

\begin{proof}
By $\heartsuit$, we can assume the divisors $V_x^1, ..., V_x^r, (-1)_{\hat{A}}^*V_x^1,..., (-1)_{\hat{A}}^*V_x^r$ are distinct to each other. For a point $y \in P$ distinct to $x$, since $V_x +(-1)_{\hat{A}}^*V_x$ is reduced, using Lemma \ref{spr}, we know that $|\mathcal{O}_P(2)|$ fails to separate $x$ and $y$ if and only if
$$V_x +(-1)_{\hat{A}}^*V_x = V_y +(-1)_{\hat{A}}^*V_y$$
equivalently
$$V_y =  ((-1)_{\hat{A}}^{\epsilon_1})^*V^1_x + ... +
((-1)_{\hat{A}}^{\epsilon_r})^*V^r_x, ~\epsilon_i \in \{0,1\}, i = 1,2,...,r$$

On the other hand, for every choice $\epsilon_i \in \{0,1\}, i = 1,2,...,r$, since $((-1)_{\hat{A}}^{\epsilon_1})^*V^i_x - V^i_x \in Pic^0(\hat{A})$, there exists $\alpha \in Pic^0(\hat{P})$ such that
$$\mathcal{H}_x + \hat{\pi}^*((-1)_{\hat{A}}^{\epsilon_1})^*V^1_x + ... +
\hat{\pi}^*((-1)_{\hat{A}}^{\epsilon_r})^*V^r_x \equiv \mathcal{Y}_x + \alpha$$
thus there exists unique $y \in P$ such that
$$\mathcal{Y}_y = \mathcal{H}_x + \hat{\pi}^*((-1)_{\hat{A}}^{\epsilon_1})^*V^1_x + ... +
\hat{\pi}^*((-1)_{\hat{A}}^{\epsilon_r})^*V^r_x$$

Then we conclude that $deg(\phi) = 2^r$, and the remaining assertion follows easily.
\end{proof}

\subsection{The reducibility of the general hyperplane
section}\label{rdcbl}

\begin{Theorem}\label{rdcb}
For a general $x \in P$, if $\mathcal{Y}_x$ has non-trivial vertical part, then $A \cong A_1 \times A_2$ ($A_1,A_2$ maybe a point), and there exists a line
bundle $L_1$ on $A_1$ and a vector bundle $E_2$ on $A_2$ such that
$E \cong L_1 \boxtimes E_2$ and that $\chi(A_1, L_1) = 1$ or
$\chi(A_2, E_2) = 1$.

The converse is also true.
\end{Theorem}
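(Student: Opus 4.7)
I would start with the easier direction ($\Leftarrow$). Suppose $A \cong A_1 \times A_2$ and $E \cong L_1 \boxtimes E_2$ with, without loss of generality, $\chi(A_1, L_1) = 1$. Multiplicativity of $R\Phi_{\mathcal{P}}$ under products gives $R\Phi_{\mathcal{P}}(E) \cong N_1 \boxtimes F_2$ where $N_1 := R\Phi_{\mathcal{P}_1}(L_1)$ is a line bundle on $\hat{A}_1$ (its rank equals $\chi(A_1,L_1)=1$) and $F_2 := R\Phi_{\mathcal{P}_2}(E_2)$. Consequently $\hat{P} \cong \hat{A}_1 \times \mathbb{P}_{\hat{A}_2}(F_2)$ and $\mathcal{O}_{\hat{P}}(1) \cong p_1^* N_1 \otimes p_2^* \mathcal{O}(1)$. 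Since $h^0(\hat{A}_1, N_1 \otimes \mathcal{P}_{1,a_1}) = 1$ for every $a_1$, every section of $\mathcal{O}_{\hat{P}}(1) \otimes \mathcal{P}_{\pi(x)}$ factors as $s_1 \otimes s_2$, with $s_1$ the essentially unique section of $N_1 \otimes \mathcal{P}_{1,\pi_1(x)}$. So every divisor in this linear system, and in particular $\mathcal{Y}_x$, contains the fixed $\hat{\pi}$-vertical divisor $p_1^*\mathrm{div}(s_1)$ as a component.

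\textbf{Forward direction: setup.} For the harder direction ($\Rightarrow$), assume $\mathcal{Y}_x$ has nontrivial vertical part for general $x$. First I would apply Remark \ref{lg} to obtain the global decomposition $\mathcal{Y} = \mathcal{H} + \mathcal{V}$ on $P \times \hat{P}$; since $\mathcal{V}$ is $\hat{\pi}$-vertical on every fiber, it is pulled back via $\mathrm{id}_P \times \hat{\pi}$ from a divisor $\mathcal{V}' \subset P \times \hat{A}$. Selecting an irreducible component of $\mathcal{V}'$ produces an algebraic family $\{V_x\}_{x \in P}$ of irreducible divisors on $\hat{A}$ of constant numerical class. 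The linear class map $P \to \mathrm{Pic}(\hat{A})$, $x \mapsto [V_x]$, factors through the Albanese map $\pi: P \to A$ and, via the identification $\mathrm{Pic}^0(\hat{A}) = A$, yields an endomorphism (up to translation) whose image is an abelian subvariety. Dually this produces a nontrivial subabelian variety $K \subset \hat{A}$ such that $V_x$ descends to a divisor on the quotient $\hat{A}/K$.

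\textbf{Main obstacle.} The crux is to upgrade this isogeny-level structure to an honest product decomposition $\hat{A} \cong K \times (\hat{A}/K)$, equivalently $A \cong A_1 \times A_2$. The plan is to leverage the universal vertical divisor: pushing $\mathcal{V}'$ forward along $\pi \times \mathrm{id}_{\hat{A}}$ produces a divisor $\mathcal{W} \subset A \times \hat{A}$ whose line bundle class is of the form $\mathcal{P} \otimes p_2^* M^{-1}$ for some $M \in \mathrm{Pic}(\hat{A})$. Using Theorem \ref{Mu} and Corollary \ref{Muc}, a section of $\mathcal{P} \otimes p_2^*M^{-1}$ exists only when $M^{-1}$ is degenerate with a nontrivial stabilizer in $\hat{A}$; combined with the $IT^0$ hypothesis on $E$ and the fact that $\pi_*\mathcal{O}_P(1) = E$ is rigid, this forces the quotient map $\hat{A} \to \hat{A}/K$ to admit a genuine section, not merely an isogeny splitting.

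\textbf{Finishing up.} Once the product structure $A \cong A_1 \times A_2$ is established, the corresponding splitting of $\mathcal{V}$ together with $\hat{\pi}_*\mathcal{O}_{\hat{P}}(1) \cong R\Phi_{\mathcal{P}}(E)^*$ translates into a splitting $R\Phi_{\mathcal{P}}(E) \cong N_1 \boxtimes F_2$ with $N_1$ a rank-one sheaf on some $\hat{A}_i$. Applying the inverse transform $R\Psi_{\mathcal{P}}$ and invoking Corollary \ref{Muc} inverts this to $E \cong L_1 \boxtimes E_2$, and the $\chi = 1$ condition ($\chi(A_1, L_1) = 1$ or $\chi(A_2, E_2) = 1$) drops out automatically since the rank-one Fourier--Mukai factor $N_1$ has Euler characteristic equal to its rank, namely $1$.
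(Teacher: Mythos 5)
Your converse direction is fine (the paper dismisses it as obvious), and your setup for the forward direction begins along reasonable lines, but the step you yourself label the ``Main obstacle'' is a genuine gap, and the mechanism you propose there is not one that works. You extract an endomorphism of $A$ only from the \emph{vertical} part $\mathcal{V}$, obtain an abelian subvariety as its image, and then hope to promote the resulting isogeny-type splitting to an honest product by pushing $\mathcal{V}'$ forward to $A\times\hat{A}$ and invoking degeneracy of a line bundle of the form $\mathcal{P}\otimes p_2^*M^{-1}$. This fails twice over. First, $\pi\times\mathrm{id}_{\hat{A}}$ has positive-dimensional ($\mathbb{P}^{n-1}$) fibers, so the pushforward of the divisor $\mathcal{V}'$ is a divisor on $A\times\hat{A}$ only if $V_x$ depends solely on $\pi(x)$ --- which is exactly one of the two alternatives (namely $h^0(\hat{P},\mathcal{V}_x)=1$) that must be \emph{established}, not assumed; in the other alternative $\mathcal{V}_x$ moves within each fiber of $\pi$ and the image of $\mathcal{V}'$ is all of $A\times\hat{A}$. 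Second, even granting the divisor $\mathcal{W}$, its class is not $\mathcal{P}\otimes p_2^*M^{-1}$ (such a bundle has no sections at all on $A\times\hat{A}$ when $\dim A>0$, since $R(p_2)_*\mathcal{P}$ is a shifted skyscraper), and the conclusion that degeneracy ``forces the quotient map to admit a genuine section'' is asserted rather than proved; nothing in Theorem \ref{Mu} or Corollary \ref{Muc} delivers it.

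The paper's route avoids the isogeny issue entirely by using the horizontal part in tandem with the vertical one. The key observations are: (i) on each fiber $\pi^{-1}(a)\cong\mathbb{P}^{n-1}$ the class $\mathcal{H}_x-\mathcal{H}_{x_0}\in \mathrm{Pic}^0(\hat{A})\cong A$ is constant because a rational variety admits no nonconstant map to an abelian variety, whence $|\mathcal{O}_{\hat{P}}(1)\otimes a|=|\mathcal{H}_x|+|\mathcal{V}_x|$, and by Bertini plus semicontinuity one of $h^0(\hat{P},\mathcal{H}_x)$, $h^0(\hat{P},\mathcal{V}_x)$ equals $1$ and the other equals $m$; (ii) this produces \emph{two} morphisms $\iota_1,\iota_2:A\to A$ (one from each part) satisfying $\iota_1(a)+\iota_2(a)=a-a_0$ and $\mathrm{im}(\iota_1)\cap\mathrm{im}(\iota_2)=\{0\}$, which immediately gives the internal direct sum decomposition $A\cong A_1\times A_2$ --- no isogeny ever needs to be split. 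The identity $\iota_1+\iota_2=\mathrm{id}$ up to translation is precisely the ingredient your argument lacks, and without it you cannot exclude that your subvariety splits $A$ only up to isogeny. Your final paragraph deducing $E\cong L_1\boxtimes E_2$ with the $\chi=1$ condition matches the paper's conclusion, but it rests on the product decomposition you have not established.
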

\begin{proof}
If $m = \chi(E)= 1$, then $R^0\Phi_{\mathcal{P}}(E)$ is a line bundle on $\hat{A}$. Let $A_1=pt$, $A_2=A$, $L_1$ the trivial line bundle on $A_1$ and $E_2=E$ on $A_2$. Then we are done.

Now assume $m > 1$.
By assumption for a general $x$, we can write
$\mathcal{Y}_x = \mathcal{H}_x + \mathcal{V}_x$ as in \ref{dec}, correspondingly write that $\mathcal{Y} = \mathcal{H} + \mathcal{V}$ as in Remark \ref{lg}. In the following, for every $x \in P$, the divisors $\mathcal{H}_x$ and $\mathcal{V}_x$ denote the fibers of $\mathcal{H}$ and $\mathcal{V}$ over $x$ respectively.

Let $a \in A$ be general. First we claim that for two general points $x_1, x_2 \in P$ over $a$, $\mathcal{H}_{x_1} \equiv \mathcal{H}_{x_2}$.
Indeed, for a general point $x_0$ fixed over $a$, we can define a morphism
$\pi^{-1}a \rightarrow A$ via $x \mapsto \mathcal{H}_x -
\mathcal{H}_{x_0}$ by identifying $\mathcal{H}_x - \mathcal{H}_{x_0}$ with
an element in $Pic^0(\hat{A}) \cong A$. This
map must be constant since $\pi^{-1}a$ is rational. So this
assertion is true. In turn we conclude that
\begin{equation}\label{syst}
|\mathcal{O}_{\hat{P}}(1) \otimes a| = |\mathcal{H}_x + \mathcal{V}_x| = |\mathcal{H}_x| + |\mathcal{V}_x| \text{~where~$x\in P$~is~a~general~point~over~$a$}
\end{equation}
and that $|\mathcal{H}_x| = \mathcal{H}_x$ or $|\mathcal{V}_x| =
\mathcal{V}_x$ holds true due to Bertini's theorem. By semicontinuity, we conclude that both $h^0(\hat{P}, \mathcal{H}_x)$ and $h^0(\hat{P}, \mathcal{V}_x)$ are invariant as $x$ varies in $P$, and one of them is equal to $m$ and the other is equal to $1$.

Therefore, for a general $a_0 \in A$ fixed, we can define two rational maps $\iota_1,\iota_2:A \dashrightarrow A$
by $\iota_1: a \mapsto \mathcal{H}_x - \mathcal{H}_{x_0}$ and $\iota_2:
a \mapsto \mathcal{V}_x - \mathcal{V}_{x_0}$ where $x,x_0$ are two general points over $a,a_0$ respectively, which are well defined and extend to two morphisms. Denote their images by
$A_1$ and $A_2$, which are two sub-torus passing through $0 \in A$ hence subgroups of $A$.

Let $a \in A_1 \cap A_2$, then $a^{-1} \in A_1 \cap A_2$. We can find $x_1,x_2 \in P$ such that $\iota_1(\pi(x_1))=a,\iota_2(\pi(x_2))=a^{-1}$. It follows that $|\mathcal{H}_{x_1}| = |\mathcal{H}_{x_0} \otimes a| \neq \emptyset$ and $|\mathcal{V}_{x_2}|=|\mathcal{V}_{x_0} \otimes a^{-1}| \neq \emptyset$,
and therefore,
$$|\mathcal{H}_{x_0} \otimes a| + |\mathcal{V}_{x_0} \otimes (a^{-1})| = |\mathcal{H}_{x_1}| + |\mathcal{V}_{x_2}| \subset |\mathcal{O}_{\hat{P}}(1) \otimes a_0|$$
Note that a general divisor in $|\mathcal{H}_{x_0}|$ is irreducible since $a_0$ is general, $h^0(\hat{P}, \mathcal{H}_{x_0})= h^0(\hat{P}, \mathcal{H}_{x_1})$, and $\mathcal{H}_{x_0} \sim_{num} \mathcal{H}_{x_1}$.
So Eq. \ref{syst} implies that $\mathcal{H}_{x_0} \equiv \mathcal{H}_{x_1}$, thus $a = 0$. Therefore, $A_1 \cap A_2 = \{0\}$.

On the
other hand, for general $a \in A$ and $x \in P$ over $a$, by definition it follows that
$$|\mathcal{H}_x| = |\mathcal{H}_{x_0} \otimes \iota_1(a)| \neq \emptyset, |\mathcal{V}_x| = |\mathcal{V}_{x_0} \otimes \iota_2(a)| \neq \emptyset$$
hence $|\mathcal{O}_{\hat{P}}(1) \otimes a| =
|\mathcal{H}_{x_0} \otimes \iota_1(a)| + |\mathcal{V}_{x_0} \otimes \iota_2(a)|$. By $|\mathcal{O}_{\hat{P}}(1) \otimes a_0| =
|\mathcal{H}_{x_0}| + |\mathcal{V}_{x_0}|$, we conclude that $a - a_0
= \iota_1(a) + \iota_2(a)$, consequently $A \cong A_1 \times A_2$.

Let $\hat{A_i} = Pic^0(A_i)$ and denote by $\mathcal{P}_i$ the pull-back
of the Poincar$\acute{e}$ bundle on $A_i \times \hat{A_i}$ via the map $A_i \times \hat{A} \rightarrow A_i \times
\hat{A}_i$. By the analysis above we conclude that
$$R\Psi_{\mathcal{P}}(\hat{\pi}_*\mathcal{O}_{\hat{P}}(1) \otimes a_0) \cong
R^0\Psi_{\mathcal{P}_1}(\hat{\pi}_*\mathcal{O}_{\hat{P}}(\mathcal{H}_{x_0}))
\boxtimes
R^0\Psi_{\mathcal{P}_2}(\hat{\pi}_*\mathcal{O}_{\hat{P}}(\mathcal{V}_{x_0}))$$

Then recalling that $|\mathcal{H}_x| = \mathcal{H}_x$ or
$|\mathcal{V}_x| = \mathcal{V}_x$ holds, we conclude that at least one of
$R^0\Psi_{\mathcal{P}_1}(\hat{\pi}_*\mathcal{O}_{\hat{P}}(\mathcal{H}_{x_0}))$ and
$R^0\Psi_{\mathcal{P}_2}(\hat{\pi}_*\mathcal{O}_{\hat{P}}(\mathcal{V}_{x_0}))$ is a
line bundle. Then since $R\Psi_{\mathcal{P}}(\hat{\pi}_*\mathcal{O}_{\hat{P}}(1) \otimes a_0) \cong t_{-a_0}^*E^*$ where $t_{-a_0}$ is the translation by $-a_0$, so one direction is completed.

The converse assertion is obvious, so this theorem is true.
\end{proof}

\subsection{The map $\phi$}\label{pfmain} We distinguish the two cases $m = 1$ and $m > 1$.

\textbf{Case $m = 1$:} In this case, for every $x \in P$, $\mathcal{Y}_x$ is a divisor $V_x$ on $\hat{A}$, hence $\phi$ is surely not birational by Corollary \ref{dgr}, and $E = R\Psi_{\mathcal{P}}(\mathcal{O}_{\hat{A}}(V))^*$ for some ample divisor $V$ on $\hat{A}$.
Find a suitable $a_0 \in A$ such that $\mathcal{O}_{\hat{A}}(V)\otimes a_0$ is symmetric. Then its Fourier-Mukai transform $R\Psi_{\mathcal{P}}(\mathcal{O}_{\hat{A}}(V)\otimes a_0) \cong t_{-a_0}^*E^*$ is a $(-1)_A$-invariant sheaf. Replacing $E$ by $t_{-a_0}^*E$, we can assume $E$ is a $(-1)_A$-invariant sheaf, so $(-1)_A$ induces an action $(-1)_A^*$ on $H^0(A, E \otimes E)$.
Note that $H^0(P, \mathcal{O}_P(2)) \cong H^0(A, S^2E)$. By Lemma \ref{keylemma}, $(-1)_A^*$ is identity on $H^0(A, S^2E)$,
hence $\phi$ factors through an involution $\sigma$ making the following commutative diagram hold
\[\begin{CD}
P      @> >>       P/(\sigma) \\
@V\pi VV               @V\pi' VV \\
A       @> >>    A/((-1)_A)
\end{CD} \]

In particular, if $A$ is simple, then Corollary \ref{dgr} gives that $deg(\phi) = 2$ if $dim(A) > 1$ since then a general $V_x$ is irreducible, and $deg(\phi) = 2^n$ if $dim(A) = 1$ since then $deg(V_x) = rank(E) = n$.

\textbf{Case $m > 1$:} In this case, using Corollary \ref{dgr}, immediately it follows that $\phi$ is not birational if and only if for a general $x \in P$, $\mathcal{Y}_x$ is
reducible.

If $A$ is simple and $\phi$ is not birational, applying Theorem
\ref{rdcb}, we conclude that $A \cong A_1 \times A_2$ and hence either $A_1$ or $A_2$ is a point. So we can assume $A_1 = A$, $L_1$ be a line bundle on $A_1$ such that $(A,L_1)$ is a principally
polarization, $A_2=pt$ and $E_2$ is a vector bundle of rank $>1$. Therefore $P \cong A \times \mathbb{P}^{n-1}$, then
reducing to the case when $m = 1$, we can show $\phi$ is of degree 2.

In conclusion, we finally proved Theorem \ref{key}.

\subsection{The degree of $\phi$ when $A$ is not simple}\label{ns}
When $A$ is not simple, to study the degree of $\phi$, it suffices to consider the case when $m=1$, i.e., $R\Phi_{\mathcal{P}}(E)$ is quasi-isomorphic to a line bundle.
This is completely clear thanks to the following classical result.
\begin{Theorem}[\cite{LB} Sec. 3.4]\label{lrdcb}
Let $A$ be an Abelian variety, $D$ an ample divisor on $A$. Then the linear
system $|D|$ can be written as $|D| = |D_1| + D_2 + ...+ D_r$ where
\begin{enumerate}
\item[(i)]{$|D_1|$ is the moving part;}
\item[(ii)]{$A \cong A_1 \times A_2 \times ... \times A_r$ where $A_i, i =2,3,...,r$ is simple;}
\item[(iii)]{for $i = 1,2,...,r$, $D_i \equiv p_i^*B_i$ where $B_i$ is a divisor on $A_i$, and for $i=2,...,r$,
$(A_i, B_i)$ is a principal polarization.}
\end{enumerate}
Moreover if $dim(A_1) > 1$, then a general element in $|B_1|$ is irreducible.
\end{Theorem}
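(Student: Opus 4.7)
The plan is to decompose $|D|$ iteratively by extracting fixed components that each induce a principal polarization on a direct factor of $A$. I would begin by writing $|D| = |D_1'| + F$ with $|D_1'|$ the moving part and $F$ the fixed divisor; if $F = 0$, the decomposition is trivial with $r=1$, $A_1 = A$, $D_1 = D$.

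Assume $F \neq 0$. I would pick an irreducible component $C$ of $F$; since $C$ is a fixed component of an ample linear system, $h^0(\mathcal{O}_A(C)) = 1$. Setting $A'' := K(\mathcal{O}_A(C))^0$ and $\tilde A := A / A''$, the line bundle $\mathcal{O}_A(C)$ would descend along $\pi: A \to \tilde A$ to an ample line bundle $\mathcal{O}_{\tilde A}(\tilde B)$ with $h^0(\tilde B) = 1$; combined with Kodaira vanishing this forces $\chi(\tilde B) = 1$, so $(\tilde A, \tilde B)$ is a principal polarization. The critical step is to promote $\pi$ to a direct product splitting $A \cong A'' \times \tilde A$: I would construct the norm endomorphism of $A$ associated to the principal polarization on $\tilde A$ and verify that it is a symmetric integral idempotent (integrality uses precisely the principality of the polarization). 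Applying Poincar\'e's complete reducibility to $(\tilde A, \tilde B)$ would then split $\tilde A$ further into simple principally polarized factors. Iterating the whole procedure on $(A'', D|_{A''})$ until no fixed components remain would yield $A \cong A_1 \times A_2 \times \cdots \times A_r$ with $A_i$ simple for $i \geq 2$ and $D_i \equiv p_i^* B_i$ giving principal polarizations for $i \geq 2$.

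For the irreducibility claim when $\dim A_1 > 1$, I would argue by contradiction: a reducible general member of $|B_1|$ would produce a nontrivial factorization of the rational map $\phi_{B_1}$, and via standard analysis (Stein factorization plus the structure of quotients of abelian varieties) one could extract yet another direct factor of $A_1$, contradicting termination of the extraction. The main obstacle I anticipate is the splitting step above --- constructing and verifying the norm endomorphism --- which is the technical heart of Poincar\'e-type decomposition theorems for polarized abelian varieties and is worked out in full in \cite{LB}.
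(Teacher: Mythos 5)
First, note that the paper does not actually prove this statement: it is quoted from \cite{LB}, Sec.~3.4 (the Decomposition Theorem), so there is no internal proof to compare against. Your outline does follow the standard argument of that reference: isolate the fixed part, show each irreducible fixed component $C$ satisfies $h^0(\mathcal{O}_A(C))=1$, descend $\mathcal{O}_A(C)$ along $A \to A/K(\mathcal{O}_A(C))^0$ to a principal polarization, and use the symmetric-idempotent/norm-endomorphism dictionary (with integrality coming from principality) to split off a direct factor. The skeleton is right, modulo the point that the norm endomorphism is attached to an abelian \emph{subvariety}, so you should work with the abelian subvariety $\mathrm{im}\bigl(\phi_D^{-1}\circ\phi_{\mathcal{O}_A(C)}\bigr)$ inside $A$ (which maps isomorphically onto $\tilde A$) rather than with the quotient $\tilde A$ directly.

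The genuine gap is the sentence ``Applying Poincar\'e's complete reducibility to $(\tilde A,\tilde B)$ would then split $\tilde A$ further into simple principally polarized factors.'' This is false: Poincar\'e reducibility only produces an isogeny, and a principally polarized abelian variety is in general \emph{not} isomorphic to a product of principally polarized simple factors; the canonical decomposition is into \emph{irreducible} principal polarizations (irreducible theta divisor), and an irreducible p.p.a.v.\ need not be simple --- e.g.\ the Jacobian of a genus-$2$ curve that is isogenous, but not isomorphic, to a product of elliptic curves. So your argument cannot deliver item (ii) as literally stated; what it (and \cite{LB}) actually delivers is that each $(A_i,B_i)$, $i\ge 2$, is an irreducible principal polarization --- which is all the paper uses later, since only the number of irreducible components of a general member of $|D|$ enters the computation of $\deg\phi$. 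Separately, your treatment of the final assertion is too vague and invokes the wrong mechanism: the standard argument is Bertini --- if the general member of the fixed-component-free system $|B_1|$ were reducible, the system would be composed with a pencil, forcing $B_1$ to be numerically a pullback from an elliptic curve, which contradicts the ampleness of $B_1$ once $\dim A_1>1$; no further ``extraction of direct factors'' is involved.
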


\begin{Corollary}
Assume that $R\Phi_{\mathcal{P}}(E) = \mathcal{O}_{\hat{A}}(-\hat{D})$ for some ample divisor $\hat{D}$ on $\hat{A}$, and the linear
system $|\hat{D}| = |\hat{D}_1| +\hat{D}_2 + ...+ \hat{D}_r$ as in Theorem \ref{lrdcb}, where $\hat{A} \cong \hat{A}_1 \times \hat{A}_2 \times ... \times \hat{A}_r$ and $\hat{D}_i = p_i^*\hat{B_i}$. Then we have
\begin{itemize}
\item[(1)]{if $\hat{A}_1$ is an elliptic curve, then $deg(\phi) = 2^{deg(\hat{B}_1) + r-1}$;}
\item[(2)]{if $dim(\hat{A}_1) > 1$, then $deg(\phi) = 2^{r}$}
\end{itemize}
\end{Corollary}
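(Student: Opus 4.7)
The plan is to invoke Corollary~\ref{dgr}: we need only count $r$, the number of irreducible components of the (entirely vertical) divisor $V_x$ for general $x\in P$. Since $R\Phi_{\mathcal{P}}(E)=\mathcal{O}_{\hat{A}}(-\hat{D})$ puts us in the case $m=1$, we have $\hat{P}=\hat{A}$ and $V_x=\mathcal{Y}_x$ is a member of $|\mathcal{O}_{\hat{A}}(\hat{D})\otimes\mathcal{P}_{\pi(x)}|$. By~$\diamondsuit$ the correspondence $x\mapsto(\pi(x),V_x)$ is bijective onto pairs $(a,Y)$ with $a\in A$ and $Y$ an effective divisor in $|\mathcal{O}_{\hat{A}}(\hat{D})\otimes\mathcal{P}_a|$; so for generic $x$ we may assume $V_x$ is a generic element of such a system. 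Since $\mathcal{P}_a$ is algebraically trivial, this system shares the decomposition of $|\hat{D}|$ provided by Theorem~\ref{lrdcb}.

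Accordingly, for general $x$ we write
\[
V_x = D_1 + \hat{D}_2' + \cdots + \hat{D}_r',
\]
where $D_1$ is a general member of a translate of $|p_1^*\hat{B}_1|$ and, for $i\ge 2$, $\hat{D}_i'=p_i^*\hat{B}_i'$ is fixed with $(\hat{A}_i,\hat{B}_i')$ a principal polarization on the simple factor $\hat{A}_i$. Each $\hat{B}_i'$ is irreducible: if $\hat{B}_i'=C+C'$ with $C,C'$ effective and non-zero, then both $C$ and $C'$ are ample on the simple abelian variety $\hat{A}_i$ (any effective non-zero divisor on a simple abelian variety is ample, since otherwise the connected component of the kernel of the associated polarization would be a non-trivial proper abelian subvariety). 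Riemann--Roch on abelian varieties then yields $C^{d_i},(C')^{d_i}\ge d_i!$ where $d_i=\dim\hat{A}_i$, and expanding $(C+C')^{d_i}$ into positive mixed intersection numbers gives
\[
(\hat{B}_i')^{d_i}=(C+C')^{d_i}\;\ge\; C^{d_i}+(C')^{d_i}\;\ge\; 2\cdot d_i!,
\]
contradicting $(\hat{B}_i')^{d_i}=d_i!$ of a principal polarization. Hence $\hat{D}_i'$ is irreducible for each $i\ge 2$, contributing $r-1$ irreducible components to $V_x$.

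It remains to count the components of $D_1=p_1^*B$ for a general $B\in|\hat{B}_1|$. If $\dim\hat{A}_1>1$, the last sentence of Theorem~\ref{lrdcb} ensures a general $B$ is irreducible, so $D_1$ is irreducible and $V_x$ has exactly $r$ components; Corollary~\ref{dgr} then gives $\deg(\phi)=2^r$, proving~(2). If $\hat{A}_1$ is an elliptic curve, a general $B\in|\hat{B}_1|$ is a sum of $\deg(\hat{B}_1)$ distinct points, so $D_1=p_1^*B$ is a disjoint union of $\deg(\hat{B}_1)$ irreducible fibres of $p_1$; thus $V_x$ has $\deg(\hat{B}_1)+r-1$ irreducible components and Corollary~\ref{dgr} gives $\deg(\phi)=2^{\deg(\hat{B}_1)+r-1}$, proving~(1). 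The only non-routine step is the intersection-theoretic irreducibility of principal polarizations on simple factors; everything else reduces to bookkeeping via Corollary~\ref{dgr} and Theorem~\ref{lrdcb}.
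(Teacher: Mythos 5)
Your proof is correct and follows the same route as the paper: translate the decomposition of Theorem \ref{lrdcb} to $|\mathcal{O}_{\hat{A}}(\hat{D})\otimes\mathcal{P}_a|$ and count irreducible components via Corollary \ref{dgr}. The paper's own proof is three lines and leaves implicit exactly the bookkeeping you carry out (irreducibility of the principal polarizations on the simple factors, and of a general member of the moving part), so your write-up is simply a more detailed version of the same argument.
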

\begin{proof}
For $a \in A$, there exists $\alpha \in \hat{A}$ such that $\mathcal{O}_{\hat{A}}(\hat{D})\otimes \mathcal{P}_a \equiv t_{\alpha}^*\hat{D}$, hence
$$|\mathcal{O}_{\hat{A}}(\hat{D})\otimes \mathcal{P}_a| = |t_{\alpha}^*\hat{D}_1| + t_{\alpha}^*\hat{D}_2 + ...+ t_{\alpha}^*\hat{D}_r$$
For $x \in P$, the divisor $\mathcal{V}_x \in |\mathcal{O}_{\hat{A}}(\hat{D})\otimes \mathcal{P}_{\pi(x)}|$. Then our assertions follow from Corollary \ref{dgr}.
\end{proof}

\subsection{Applications}
The results and the ideas of the proof probably find their applications when considering the morphism defined by the square of a line bundle on an irregular variety. Here we prove that
\begin{Proposition}\label{big}
Let $a: X \rightarrow A$ be a generically finite morphism to an Abelian variety and $V$ a Cartier divisor on $X$ such that $\mathcal{O}_X(V)$ is full w.r.t. $a$ (see Def. \ref{defgv}). Then $\mathcal{O}_X(V)$ is CGG at general points of $X$ w.r.t. $a$, and the linear system $|2V|$ defines a generically finite map. In particular, the divisor $V$ is big.
\end{Proposition}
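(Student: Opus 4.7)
The first assertion --- that $\mathcal{O}_X(V)$ is CGG at general points of $X$ w.r.t.\ $a$ --- is immediate from fullness applied at $\alpha = 0$: since $h^0(X,\mathcal{O}_X(V)) \geq 1$, any non-zero section $s \in H^0(X, \mathcal{O}_X(V))$ vanishes only on a proper closed divisor $D \subset X$, and at any $x \in X \setminus D$ the single section $s$ already supplies a non-vanishing element in the CGG evaluation, giving the required surjectivity.

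For the generic finiteness of $\phi := \phi_{|2V|}$, I plan to set up a universal divisor over $\hat{A}$ in the spirit of Theorem \ref{pf} and then adapt the incidence argument of Lemma \ref{spr}, using that $|V+a^*\alpha| + |V+a^*\alpha^{-1}| \subset |2V|$. Concretely, consider $\mathcal{L} := p_1^*\mathcal{O}_X(V)\otimes \mathcal{P}_a$ on $X \times \hat{A}$ and $G := (p_2)_*\mathcal{L}$ on $\hat{A}$; by fullness, $G_\alpha \cong H^0(X,\mathcal{O}_X(V) \otimes a^*\alpha) \neq 0$ for every $\alpha \in \hat{A}$. Over the open locus $\hat{A}^{\circ} \subset \hat{A}$ where $G$ is locally free, put $P := \mathbb{P}_{\hat{A}^{\circ}}(G)$ and, invoking Theorem \ref{pf} (and taking closures in a suitable projective compactification), produce a universal divisor $\mathcal{D} \subset X \times P$ in the class $p^*\mathcal{L} \otimes q^*\mathcal{O}_P(1)$, where $p, q$ are the two projections from $X \times P$. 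For each $x \in X$ the incidence divisor $\mathcal{D}_x := \mathcal{D} \cap (\{x\}\times P) \subset P$ then lies in the class $\pi_P^*\mathcal{P}_{a(x)} \otimes \mathcal{O}_P(1)$, with $\pi_P : P \to \hat{A}^{\circ}$ the projection. Since $\pi_P^* : \mathrm{Pic}^0(\hat{A}) \to \mathrm{Pic}(P)$ is injective, the divisor class of $\mathcal{D}_x$ determines $a(x) \in A$.

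To finish, I would argue by contradiction: if $\phi$ were not generically finite, through a general $x \in X$ would pass an irreducible contracted curve $C$, and since $a$ is generically finite, $a(C)$ must be a positive-dimensional curve in $A$. Adapting Lemma \ref{spr}, for generic $\alpha$ every $D_\alpha \in |V+a^*\alpha|$ containing $x$ must contain $C$; translated to $P$, this yields $\mathcal{D}_x \subset \mathcal{D}_y$ (outside a proper closed subset) for every $y \in C$, and symmetry between $x$ and a general $y \in C$ lying in the same contracted fiber forces $\mathcal{D}_x = \mathcal{D}_y$ as divisors. Comparing classes gives $\pi_P^*\mathcal{P}_{a(x)} = \pi_P^*\mathcal{P}_{a(y)}$, hence $a(x) = a(y)$ for generic $y \in C$, contradicting $\dim a(C) \geq 1$. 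Generic finiteness of $\phi$ then forces $\dim \phi(X) = \dim X$, and $V$ is big.

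The most delicate step I anticipate is justifying ``for generic $\alpha$, every $D_\alpha \ni x$ contains $C$''. This would fail if $C$ lay in the base locus of $|V + a^*\alpha|$ for every $\alpha$; such a pathology should be ruled out using that $a(C)$ is a curve, so the restriction $a|_C^*: \hat{A} \to \mathrm{Pic}^0(C)$ is non-trivial, combined with fullness of $V$ --- but making this precise, together with handling the technical issue of the non-local-freeness locus of $G$ when constructing $\mathcal{D}$, is where the main work of the proof lies.
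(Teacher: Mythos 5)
Your argument for the first assertion has a genuine gap. Continuous global generation at $x$ requires the evaluation map $\oplus_{\alpha\in U}H^0(X,\mathcal{O}_X(V)\otimes a^*\alpha)\otimes a^*\alpha^{-1}\to \mathcal{O}_X(V)$ to be surjective at $x$ for \emph{every} nonempty open $U\subset\hat{A}$, including small $U$ not containing $0$; for such $U$ the single section $s\in H^0(X,\mathcal{O}_X(V))$ contributes nothing, so ``fullness applied at $\alpha=0$'' proves nothing. The paper's route is the one you need: build a universal divisor $\mathcal{V}\subset X\times\hat{A}$ with $\mathcal{V}_\alpha\in|V+a^*\alpha|$ (choose a rational section of the bundle of linear systems and take closures, which also sidesteps your worry about the non-locally-free locus of $(p_2)_*\mathcal{L}$), observe that for general $x$ the slice $\mathcal{V}_x$ is a proper closed subset of $\hat{A}$, and conclude that every nonempty open $U$ contains some $\alpha$ with $x\notin\mathcal{V}_\alpha$, i.e.\ a section of $V+a^*\alpha$ not vanishing at $x$.

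This matters doubly, because the ``most delicate step'' you flag at the end is resolved exactly by the corrected first part, and not by the vague appeal to $a|_C^*$ being non-trivial. What the Lemma \ref{spr} argument actually gives is the dichotomy: if $x\in D_\alpha$ then $C\subset D_\alpha$ or $C\subset D_{\alpha^{-1}}$ for every $D_{\alpha^{-1}}\in|V+a^*\alpha^{-1}|$; this upgrades to ``every $D_\alpha\ni x$ contains $C$'' precisely when $C\not\subset \mathrm{Bs}|V+a^*\alpha^{-1}|$. Since $C$ passes through a general point $x$, CGG at $x$ (once correctly proved) shows $\{\beta: x\in \mathrm{Bs}|V+a^*\beta|\}$ is nowhere dense, hence so is $\{\beta: C\subset \mathrm{Bs}|V+a^*\beta|\}$, which is exactly what you need. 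With these two repairs your incidence argument --- universal divisor, class of $\mathcal{D}_x$ determining $a(x)$, contradiction with $\dim a(C)\geq 1$ --- is sound and is essentially the paper's own proof (the paper works with $\mathcal{V}\subset X\times\hat{A}$ rather than over the projectivized bundle, and invokes generic finiteness of $a$ at the same point you do). As written, however, the proposal establishes neither assertion.
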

\begin{proof}
If $a^*: Pic^0(A) \rightarrow Pic^0(X)$ is not an embedding, then we get a factorization $$a = \iota \circ a': X \rightarrow A' \rightarrow A,$$ where $A'$ is the Abelian variety with dual $Pic^0(A') = a^*Pic^0(A) \subset Pic^0(X)$, and $\iota: A' \rightarrow A$ arises from the dual map $a^*: Pic^0(A) \rightarrow a^*Pic^0(A) = Pic^0(A')$. Note that $\iota^*: Pic^0(A) \rightarrow Pic^0(A')$ is finite and surjective, hence maps an open set to an open set. Since $H^0(X, F \otimes a^*\alpha) = H^0(X, F \otimes a'^*\iota^*\alpha)$ for a sheaf $F$ on $X$ and $\alpha \in \hat{A}$, we conclude that for a sheaf $F$ on $X$,
$$F ~is ~CGG~ w.r.t. ~a \Leftrightarrow F ~is ~CGG~ w.r.t. ~a'$$ and $$F ~is ~full~ w.r.t. ~a \Leftrightarrow F ~is ~full~ w.r.t. ~a'.$$ Then we consider the map $a'$ instead, so we can assume $a^*: Pic^0(A) \rightarrow Pic^0(X)$ is injective in the following.

Consider the line bundle $\mathcal{L}: = p_1^*\mathcal{O}_X(V) \otimes \mathcal{P}_a$ on $X \times \hat{A}$. Put
$$\hat{P}= Proj_{\mathcal{O}_A}(\oplus S^n ((p_2)_*\mathcal{L})^*)$$
Then there exists an open set $U$ of $\hat{P}$ parametrizes the divisors in $|\mathcal{O}_X(V)\otimes \alpha|, \alpha \in \hat{A}$. We can find an open set $U_0 \subset \hat{A}$ and a section $s: U_0 \rightarrow U$. Identifying $U_0$ with $s(U_0)$, we get a universal divisor
$$\mathcal{Y} \subset X \times U_0$$
and denote its closure in $X \times \hat{A}$ by $\mathcal{V}$. Then $\mathcal{V}_\alpha \equiv\mathcal{O}_X(V) \otimes \alpha$ for $\alpha \in U_0$,

As in \cite{BLNP} Sec. 5, take a general $\alpha_0 \in U_0$, and define a map
$$f_{\alpha_0}: Pic^0(A) \rightarrow Pic^0(X),  \alpha \mapsto \mathcal{O}_X(\mathcal{V}_\alpha - \mathcal{V}_{\alpha_0})$$
which extends to a morphism. By rigidity,
$$f: = f_{\alpha_0} - f_{\alpha_0}(0):  Pic^0(A) \rightarrow Pic^0(X)$$ is a homomorphism.

By definition, the image of $f$ coincides with $a^*Pic^0(A)$. Identifying $\hat{A} = Pic^0(A)$ with $a^*Pic^0(A)$, we have $f = id_{\hat{A}}$. Then arguing as in Sec. 5.3 in \cite{BLNP}, we prove
$$\mathcal{V} \equiv p_1^*\mathcal{O}_X(V) \otimes \mathcal{P}_a \otimes p_2^*\mathcal{O}_{\hat{A}}(\mathcal{V}_p)$$
where $p\in X$ is a point mapped to $0 \in A$ via $a$.
Note that for $x\in X$, $\mathcal{V}_x = \{\alpha \in \hat{A}|x\in \mathcal{V}_\alpha\}$, and for two distinct points $x, y \in X$, if $a(x) \neq a(y)$, then $\mathcal{V}_x \neq \mathcal{V}_y$ since they are not even linearly equivalent. Since the map $a$ is finite, considering the universal divisor $\mathcal{V}$ and arguing as in the proof of Lemma \ref{spr}, we can show $|2V|$ defines a generically finite map.

Identifying $\hat{A}$ with a family of divisors on $X$ via $\alpha \mapsto \mathcal{V}_\alpha$, then for a general point $x \in X$, $\mathcal{V}_x$ is a divisor on $\hat{A}$, which parametrizes the divisors passing through $x$. So it is easy to conclude that $\mathcal{O}_X(V)$ is CGG at general points of $X$.
\end{proof}

\end{document}